\documentclass[11pt]{amsart}
\usepackage{amsmath,amssymb,amscd,mathrsfs}
\usepackage{url}
\usepackage{eepic,epic}
\usepackage{longtable}
\newtheorem{theorem}{Theorem}[section]

\newtheorem{lemma}[theorem]{Lemma}
\newtheorem{corollary}[theorem]{Corollary}
\theoremstyle{definition}
 
\newtheorem{remark}[theorem]{Remark}

\newtheorem{example}[theorem]{Example}


\newcommand{\tdfn}{f_N}
\newcommand{\tdfm}{f_M}
\newcommand{\pphi}{\varphi}
\newcommand{\hhh}{p}


\def\ord{\mathop{\mathrm{ord}}\nolimits}

\def\C{\mathbb C} \def\Q{\mathbb Q} \def\R{\mathbb R} \def\Z{\mathbb Z}
 \def\F{\mathbb F}

\DeclareMathOperator{\rank}{rank}
\DeclareMathOperator{\lcm}{lcm}
\newcommand{\divides}{\mid}

\newenvironment{(enumerate)}{
  \begin{enumerate}
  
  }{\end{enumerate}}

\begin{document} 
\title[Automorphisms of Enriques surfaces and their entropy]{On automorphisms of 
Enriques surfaces and their entropy}
\author[Y. Matsumoto]{Yuya Matsumoto}
\address{Graduate School of Mathematics, Nagoya University, 
Furocho, Chikusaku, Nagoya, 464-8602, Japan}
\email{matsumoto.yuya@math.nagoya-u.ac.jp}
\author[H. Ohashi]{Hisanori Ohashi}
\address{Department of Mathematics, 
Faculty of Science and Technology, 
Tokyo University of Science, 
2641 Yamazaki, Noda, 
Chiba 278-8510, Japan}
\email{ohashi\_hisanori@ma.noda.tus.ac.jp, ohashi.hisanori@gmail.com}
\author[S. Rams]{S{\L}awomir Rams}
\address{Institute of Mathematics, Jagiellonian University, 
ul. {\L}ojasiewicza 6,  30-348 Krak\'ow, Poland}
\email{slawomir.rams@uj.edu.pl}
\thanks{Y.~M. is supported by JSPS KAKENHI Grant Numbers 15H05738 and 16K17560.
H.~O. is supported by JSPS KAKENHI 15K17521.
S.~R. is partially supported  by National Science Centre, Poland, grant 2014/15/B/ST1/02197.
}

\subjclass[2010]{Primary: 14J28; 14J50 Secondary: 37B40}
\begin{abstract} 
Consider an arbitrary automorphism of an Enriques surface 
with its lift to the covering $K3$ surface.
We prove a bound of the order of the lift acting
on the anti-invariant cohomology sublattice of the Enriques involution.
We use it to obtain some mod 2 constraint on the original automorphism.
As an application, we give a necessary condition for Salem numbers to be dynamical degrees on Enriques surfaces and obtain a new lower bound on the
minimal value.
In the Appendix, we give a complete list of Salem numbers 
that potentially may be the minimal dynamical degree on Enriques surfaces and 
for which the existence of geometric automorphisms is unknown.
\end{abstract}
\date{\today} 
\maketitle

\section{introduction}

It is known that the only compact K\"{a}hler surfaces that admit automorphisms 
of positive topological entropy are rational, Enriques, $K3$ surfaces and
 complex tori (see e.g. \cite[$\S$~2.5]{cantat}). 
Salem numbers that can be realized  as 
the dynamical degrees of
automorphisms of 2-dimensional tori are fully characterized in \cite[Thm~1.1]{reschke12}
in terms of values of the minimal polynomials. The same question is solved for 
rational surfaces in terms of Weyl groups in \cite{Uehara}.
These are exactly the description of the dynamical spectrum 
\[\Lambda(\mathcal{C})=\{\lambda(f)\in\mathbb{C}\mid \lambda(f)\text{is the dynamical degree of $f\in \mathrm{Aut}(S)$ for some $S\in \mathcal{C}$}\}\]
where the class of surfaces $\mathcal{C}$ is taken to be $2$-tori or rational surfaces.
For $K3$ surfaces, the recent preprint \cite{BF-T} describes the case of degree 22 Salem numbers. Other degrees on $K3$ surfaces and also on Enriques surfaces the description 
of $\Lambda(\mathcal{C})$ remains open.

The purpose of this note is to give a new property which is satisfied by all
automorphisms of Enriques surfaces. As a consequence, we obtain a new constraint on 
the Salem numbers that appear as 
the dynamical degrees of automorphisms of Enriques surfaces, namely a property of $\Lambda
(\text{Enriques})$. It should be noted 
that despite its ergodic
interpretation \cite[$\S$.2.2.2]{cantat}, the problem we consider is purely algebraic, 
in the sense that the dynamical degree of an automorphism of an Enriques surface $S$ can be detected as the
spectral radius of the map it induces on $\mathrm{Num}(S)$ (see $\S$.\ref{sect-dynamical-degrees}).

To state the theorem, 
let $S$ be an Enriques surface and let  $\tilde{S}$ be its $K3$-cover. 
We denote by
$\varepsilon$ the covering involution of the double \'etale cover $\pi: \tilde{S} \rightarrow S$ and put $N$ to denote 
the  orthogonal complement of the $\varepsilon$-invariant sublattice $H^2(\tilde{S},\Z)^{\varepsilon}$ in the 
lattice $H^2(\tilde{S},\Z)$:
\begin{equation} \label{eq-latN}
N = (H^2(\tilde{S},\Z)^{\varepsilon})^{\perp} \, .  
\end{equation}
Recall that  for an arbitrary automorphism 
 $f\in \mathrm{Aut}(S)$, there exists  a lift $\tilde{f} \in \mathrm{Aut}(\tilde{S})$. Obviously the lift in question is not unique 
(given $\tilde{f}$, the automorphism 
$\tilde{f} \circ \varepsilon$ is also a lift of $f$), but the constraints we prove are valid for any choice of $\tilde{f}$.
As is well-known (see e.g.  {\bf \cite{namikawa}}), the lattice $N$  is stable under the cohomological action $\tilde{f}^*$, hence the restriction 
$$
\tdfn := \tilde f^* \rvert_N
$$
is an automorphism (isometry) of $N$.  It is easy to see that  the order $\ord(\tdfn)$ is finite, 
Lemma \ref{Nfin}.  Here we show a more precise constraint on 
the order of the map $\tdfn$:
\begin{theorem} \label{th1}
Let $S$ be an Enriques surface and let $f\in \mathrm{Aut}(S)$. 
Then, the order of $\tdfn$ is an integer which divides at least one of the integers
\[120, 90, 84, 72, 56, 48.\]
Equivalently and explicitly, these are one of the 31 integers
\begin{equation} \label{eq-T1}
120,90,84,72,60,56,
 48,45,42,40,36,30,28, 24,21,20,18, 16, 15,14,12,10,\dots,1.
\end{equation}
\end{theorem}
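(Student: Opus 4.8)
\emph{Sketch of the argument.} The plan is to reduce the statement to an analysis of the characteristic polynomial $P(x)$ of $\tdfn$ and then to run a finite case check, using the rank of $N$, its signature together with the Hodge structure of $\tilde S$, and the fact that $N\cong U\oplus U(2)\oplus E_{8}(2)$ is a $2$-elementary lattice of signature $(2,10)$ with discriminant group $(\Z/2)^{10}$. By Lemma~\ref{Nfin} the order of $\tdfn$ is finite, so $\tdfn$ is semisimple with roots of unity as eigenvalues and $P(x)=\prod_{d}\Phi_{d}(x)^{e_{d}}$ factors into cyclotomic polynomials, with $\sum_{d}e_{d}\varphi(d)=\rank N=12$ and $\ord(\tdfn)=\lcm\{d:e_{d}>0\}$. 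Hence it suffices to determine which multisets $\{d\}$ with $\sum_{d}e_{d}\varphi(d)=12$ can occur, and to verify that the resulting values of $\lcm\{d\}$ are precisely the $31$ divisors of $120,90,84,72,56,48$ listed in~\eqref{eq-T1}. The degree bound alone restricts $d$ to those with $\varphi(d)\le12$, but still permits, e.g., $d=13$ and $d=11$, and combinations with $\lcm$ equal to $35,63,70,80,105,168,180$; these are the cases to be ruled out.

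For the signature, averaging an ample class of $\tilde S$ over $\varepsilon$ produces an $\varepsilon$-invariant ample class, so $H^{2}(\tilde S,\Z)^{\varepsilon}$ has signature $(1,9)$ and $N$ has signature $(2,10)$; as $\mathrm{Re}\,\omega$ and $\mathrm{Im}\,\omega$ span a positive-definite $2$-plane of $N_{\R}$ (where $\C\omega=H^{2,0}(\tilde S)\subset N_{\C}$), the complement $N\cap H^{1,1}(\tilde S)$ is negative definite. Since $\tilde f^{*}$ acts on $H^{2,0}$ by a root of unity, this $2$-plane is $\tdfn$-stable with $\tdfn$ acting on it by a rotation; hence at most one of the eigenspaces $\ker(\tdfn\mp1)$ and $\ker\Phi_{d}(\tdfn)$ ($d\ge3$) fails to be negative definite, and that one carries the whole $2$-dimensional positive part. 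This pins down the real local invariants of any $P(x)$ realizable on a lattice isometric to $N$.

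The main input is arithmetic: since $A_{N}$ is $2$-elementary, $N\otimes\Z_{\ell}$ is unimodular for each odd prime $\ell$. I would invoke the theory of $\Phi_{d}$-lattices (Hermitian lattices over $\Z[\zeta_{d}]$): if $\Phi_{p^{k}}\mid P(x)$ with $p$ an odd prime, then $L_{p^{k}}:=\ker\Phi_{p^{k}}(\tdfn)$ is a primitive sublattice with the structure of a Hermitian $\Z[\zeta_{p^{k}}]$-lattice, the $p$-adic valuation of $\disc(L_{p^{k}})$ has the same parity as the multiplicity $e_{p^{k}}$ (because $v_{p}(\disc\Q(\zeta_{p^{k}}))$ is odd and the norm of the totally real Hermitian determinant contributes an even power of $p$), and, as a $\Z_{p}[\zeta_{p^{k}}]$-module, the $p$-primary part of $A_{L_{p^{k}}}$ has abelian rank essentially equal to that valuation. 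So, to keep $\disc N=\pm2^{10}$ prime to $p$, one needs $e_{p^{k}}$ even, which $\sum_{d}e_{d}\varphi(d)=12$ allows only for $p\le7$; and whenever an odd prime $p$ is nevertheless forced into $\disc(L_{d})$ for some $d$ with $p\mid d$, its $p$-primary part must glue anti-isometrically onto that of another cyclotomic block (as $A_{N}$ has no $p$-torsion), forcing a matching of ranks and of the induced $\tdfn$-action on the corresponding finite quadratic forms. This bookkeeping excludes $11$ and $13$ at once (the $p$-torsion of the large $\Phi_{11}$- or $\Phi_{13}$-block is both too big — abelian rank $\ge p-2$ — and of the wrong parity to be glued away by the remaining rank-$\le2$ space), and it likewise kills the mixed orders $35,63,70,80,105,168,180$. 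The prime $\ell=2$ — for which $2$ really divides $\disc N$, so no unimodularity helps — must instead be treated by a direct comparison with the $2$-adic genus of $N$, equivalently with the discriminant form $q_{N}$ on $(\Z/2)^{10}$ together with the action $\tdfn$ induces on it; this limits the $2$-part of $\ord(\tdfn)$ and is the delicate point.

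Putting the three constraints together leaves only finitely many admissible multisets $\{d\}$, and their least common multiples are exactly the $31$ integers in~\eqref{eq-T1}, so $\ord(\tdfn)$ divides one of $120,90,84,72,56,48$. I expect the main obstacle to be the arithmetic step, and inside it the $2$-adic analysis: each odd prime is dispatched by the clean ``$\Z_{\ell}$-unimodular, so large cyclotomic blocks and their gluings are obstructed'' mechanism, whereas the prime $2$ requires a careful (but elementary) case check of the $2$-part of the order against the fixed $2$-adic structure of $N$; keeping that finite analysis organized — rather than any single hard lemma — is where the real work lies.
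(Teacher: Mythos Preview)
Your route is genuinely different from the paper's, and for most of the exclusions (orders divisible by $11$ or $13$; the ``coprime'' mixed orders $63$, $80$, $105$, $168$, $180$) your Hermitian-lattice/resultant bookkeeping at odd primes really does give a clean contradiction: when the relevant cyclotomic blocks have pairwise trivial resultant at the prime $p$, the odd $p$-adic valuation of the $\Phi_{p^k}$-block cannot be glued away. The paper, by contrast, never touches Hermitian lattices. Its two ingredients are purely mod-$2$: (i) an explicit $11$-dimensional $\tdfn$-invariant subspace $N^{*,+}\subset (1/2)N/N$ forces $(x+1)^{2}\mid (p_{N}\bmod 2)$, which immediately kills all $\varphi(m)=12$ cases; and (ii) the canonical $(\tdfm,\tdfn)$-equivariant isomorphism $M^{*}/M\cong N^{*}/N$ transports the action on $A_{N}$ to the action of $f^{*}$ on $\mathrm{Num}(S)\cong U\oplus E_{8}$ modulo $2$, after which Bayer--Fluckiger's constraint (that $p_{M}(1)p_{M}(-1)$ is a square or zero) finishes the remaining cases $11,22,35,70$.

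The gap in your sketch is precisely at $35$ and $70$ (and, for the same reason, $11$ and $22$). Here your odd-prime mechanism does \emph{not} yield a contradiction: for $P=\Phi_{5}\Phi_{7}\Phi_{1}^{2}$ the resultants $\mathrm{Res}(\Phi_{5},\Phi_{1})=5$ and $\mathrm{Res}(\Phi_{7},\Phi_{1})=7$ allow the $5$- and $7$-torsion to be glued into the rank-$2$ fixed block, and one checks that the equivariance condition forces only $v_{5}(\disc L_{5})=v_{7}(\disc L_{7})=1$, which is perfectly realizable. So you are right that the obstruction must be $2$-adic --- but it is not that ``the $2$-part of $\ord(\tdfn)$'' is limited (the $2$-part can be as large as $16$, and nothing further is excluded there). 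What actually happens at $2$ is that the characteristic polynomial of $\tdfn$ on $A_{N}\cong(\Z/2)^{10}$ is forced to be $F_{5}F_{7}$, and this is simultaneously the mod-$2$ reduction of $p_{M}$; the contradiction then lives on the \emph{other} lattice $M\cong\mathrm{Num}(S)(2)$, via a constraint ($p_{M}(1)p_{M}(-1)\equiv 3\pmod 8$, not a square) that is invisible if one looks only at $N$. Your proposal never invokes $M$ or $\mathrm{Num}(S)$, so as written it cannot close these cases; to salvage your approach you would need either this bridge to $M$ or a direct (and nontrivial) argument that $O(q_{N})$ over $\F_{2}$ contains no element with characteristic polynomial $F_{5}F_{7}$ compatible with the full $2$-adic genus of $N$.
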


We use the above theorem to derive the following mod 2 constraint for a Salem number 
to be the dynamical degree of automorphisms of Enriques surfaces  (which refines \cite[Lemma~4.1]{third}):
\begin{theorem} \label{mod2}
Let $f$ be an automorphism of an Enriques surface $S$ and let $s_{\lambda}$ be the minimal polynomial of its dynamical degree $\lambda(f)$. 
Then the modulo $2$ reduction of 
$s_{\lambda}$ is a product of (some of) the following polynomials
\begin{align*}
F_{ 1}(x) &=                                                          x + 1, \quad 
F_{ 3}(x)  =                                                    x^2 + x + 1, \quad 
F_{ 5}(x)  =                                        x^4 + x^3 + x^2 + x + 1, \\
F_{ 7}(x) &=                            x^6 + x^5 + x^4 + x^3 + x^2 + x + 1, \\
F_{ 9}(x) &=                            x^6 +             x^3           + 1, \\
F_{15}(x) &=                x^8 + x^7 +       x^5 + x^4 + x^3 +       x + 1, \\
\end{align*}
\end{theorem}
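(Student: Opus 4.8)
The plan is to transfer the order bound of Theorem~\ref{th1}, which constrains $\tdfn$ on $N$, to the action of $\tilde f^{*}$ on the invariant lattice $M:=H^{2}(\tilde S,\Z)^{\varepsilon}$, and then to read off the reduction modulo~$2$. Write $\tdfm:=\tilde f^{*}\rvert_{M}$ and let $\chi_{M},\chi_{N}\in\Z[x]$ be the characteristic polynomials of $\tdfm,\tdfn$. I may assume $\lambda(f)>1$, since otherwise $s_{\lambda}=x-1\equiv F_{1}\pmod 2$ and the claim is trivial; then $\lambda(f)$ is a Salem number and $s_{\lambda}$ is a reciprocal irreducible polynomial of even degree $\ge 4$, so its reduction $\bar s_{\lambda}:=s_{\lambda}\bmod 2$ is again reciprocal. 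Since $\pi^{*}$ induces an $f^{*}$-equivariant finite-index embedding $\mathrm{Num}(S)\hookrightarrow M$ and $\lambda(f)$ is the spectral radius of $f^{*}$ on $\mathrm{Num}(S)$, it is a root of $\chi_{M}$; hence $s_{\lambda}\mid\chi_{M}$ and $\deg s_{\lambda}\le\rank M=10$.

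The core step is the divisibility $\bar s_{\lambda}\mid(\chi_{N}\bmod 2)$, which I obtain from discriminant forms. By \cite{namikawa} one has $M\cong L(2)$ and $N\cong U\oplus L(2)$, where $L:=U\oplus E_{8}(-1)$ is the even unimodular lattice of signature $(1,9)$; in particular the discriminant groups $A_{M}=M^{*}/M$ and $A_{N}=N^{*}/N$ are both isomorphic to $L/2L$. As rescaling a form does not change its isometry group, $\tdfm$ acts on $A_{M}\cong L/2L$ as $\tdfm\bmod 2$, hence with characteristic polynomial $\chi_{M}\bmod 2$. On the other side, multiplication by $2$ carries $N^{*}$ isomorphically onto $2N^{*}\subseteq N$ (the inclusion since $A_{N}$ is $2$-torsion), giving a $\tdfn$-equivariant isomorphism $A_{N}\cong 2N^{*}/2N\subset N/2N$; the subspace $2N^{*}/2N$ is exactly the radical of the $\F_{2}$-valued bilinear form reduced from $N$, hence $\tdfn$-stable of corank~$2$, so the characteristic polynomial of $\tdfn$ on $A_{N}$ divides $\chi_{N}\bmod 2$. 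Finally, since $H^{2}(\tilde S,\Z)$ is unimodular and $M=N^{\perp}$ in it, the overlattice $H^{2}(\tilde S,\Z)\supset M\oplus N$ yields an $\tilde f^{*}$-equivariant anti-isometry $A_{M}\cong A_{N}$; the two actions just described are therefore conjugate, whence $\chi_{M}\bmod 2\mid\chi_{N}\bmod 2$ and a fortiori $\bar s_{\lambda}\mid(\chi_{N}\bmod 2)$.

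Now I bring in Theorem~\ref{th1}. As $\ord(\tdfn)$ divides one of $120,90,84,72,56,48$, the polynomial $\chi_{N}$ is a product of cyclotomic polynomials $\Phi_{d}$ with each $d$ dividing one of these integers; moreover $\Phi_{d}\bmod 2$ is a power of $\Phi_{d'}\bmod 2$, where $d'$ is the odd part of $d$, so only $d'\in\{1,3,5,7,9,15,21,45\}$ is relevant. A direct check gives, modulo~$2$: $\Phi_{1}\equiv F_{1}$, $\Phi_{3}\equiv F_{3}$, $\Phi_{5}\equiv F_{5}$, $\Phi_{9}\equiv F_{9}$, with $F_{5}$ and $F_{9}$ irreducible over $\F_{2}$; whereas $\Phi_{7}\equiv F_{7}$ splits into two conjugate irreducible cubics and $\Phi_{15}\equiv F_{15}$ into two conjugate irreducible quartics, each pair interchanged by $x\mapsto x^{-1}$; and $\Phi_{21}\bmod 2$ splits into two irreducible sextics, likewise interchanged by $x\mapsto x^{-1}$, neither of which divides any $F_{i}$. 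Since $\phi(45)=24>12=\rank N$, the value $d'=45$ cannot occur in $\chi_{N}$; and since $\phi(21)=12$, the value $d'=21$ can occur only if $\chi_{N}\in\{\Phi_{21},\Phi_{42}\}$ --- but then $\bar s_{\lambda}$ would divide a product of two non-reciprocal irreducible sextics, hence be either trivial (impossible, as $\deg s_{\lambda}\ge 4$), or non-reciprocal (impossible, as $\bar s_{\lambda}$ is reciprocal), or of degree $12>10\ge\deg s_{\lambda}$ (impossible); so $d'=21$ does not occur either.

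It remains to conclude. In the surviving cases every irreducible factor of $\chi_{N}\bmod 2$, and hence of $\bar s_{\lambda}$, lies among the eight polynomials $x+1$, $x^{2}+x+1$, $F_{5}$, $F_{9}$, the two cubic factors of $F_{7}$, and the two quartic factors of $F_{15}$. The involution $x\mapsto x^{-1}$ fixes each of $x+1$, $x^{2}+x+1$, $F_{5}$, $F_{9}$ and swaps the two cubics as well as the two quartics; since $\bar s_{\lambda}$ is reciprocal, the two cubics occur in it with equal multiplicity, and so do the two quartics. Therefore $\bar s_{\lambda}=F_{1}^{a_{1}}F_{3}^{a_{3}}F_{5}^{a_{5}}F_{7}^{a_{7}}F_{9}^{a_{9}}F_{15}^{a_{15}}$ for non-negative integers $a_{1},a_{3},a_{5},a_{7},a_{9},a_{15}$, which is the assertion. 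I expect the second paragraph to be the main obstacle --- identifying $A_{M}$ and $A_{N}$ as conjugate $\tilde f^{*}$-modules and recognizing $A_{N}$ inside $N\otimes\F_{2}$ as the radical of the reduced form; granted that, the remainder is a finite computation with cyclotomic polynomials modulo~$2$ together with the reciprocity of Salem polynomials.
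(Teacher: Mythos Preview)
Your proof is correct and follows essentially the same route as the paper's: establish $\bar s_\lambda \mid (\chi_N \bmod 2)$ via the $\tilde f^{*}$-equivariant identification $A_M \cong A_N$ of discriminant groups, factor $\chi_N$ into cyclotomics, reduce mod~$2$, and use self-reciprocity of $s_\lambda$ to pair up the non-self-reciprocal irreducible factors of $F_7$ and $F_{15}$. The only minor difference is that the paper invokes Lemma~\ref{lem12} (which already gives $\varphi(m)\le 8$ for every cyclotomic factor $\Phi_m$ of $\chi_N$) rather than Theorem~\ref{th1}, so it does not need your separate elimination of the odd parts $d'=21$ and $d'=45$; your detour through Theorem~\ref{th1} is equally valid, just a touch longer.
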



\noindent
Here each $F_m(x) \in \F_2[x]$ is the modulo $2$ reduction of the $m$-th cyclotomic polynomial $\Phi_m(x)\in \mathbb{Z}[x]$.
Among these 
six polynomials, 
$F_7(x)$ and $F_{15}(x)$ are products of two distinct irreducible factors, each of which is  not self-reciprocal,
whereas the other four are irreducible.

We do not know whether all the  six         
factors above do appear among factorizations
of the minimal polynomial of the map induced by an automorphism of Enriques surfaces, but we are able to give examples where
$F_1(x), F_3(x), F_5(x)$ 
do come up (see Example~\ref{example-existence-3factors}.a). 
On the other hand, we can check that they all appear from some {\em{lattice isometry}}
of $U\oplus E_8$ by using lattice theory and ATLAS table, for example.

A closely related problem to the description of the dynamical spectrum is to find the minimal nontrivial dynamical degree $1\neq \lambda\in \Lambda(\mathcal{C})$.

This question was answered for complex tori (see \cite[Thm~1.3]{mcmullen11}), rational and   $K3$ surfaces
by McMullen (see  
\cite{mcmullen07,mcmullen11,mcmullen16}), whereas
the smallest dynamical degree attained by automorphisms of Enriques surfaces is yet to be found (\cite[Question~4.5.(3)]{third}).
By  \cite[Remark~4.4]{third},  none of the smallest five Salem numbers (including the ones
of degree $>10$) can be realized on Enriques surfaces. 
Although explicit 
descriptions of automorphism groups of several special families (see \cite{BP,MO15}) of Enriques surfaces are known,
our present knowledge seems not to be enough to 
determine the minimal dynamical degree of automorphisms of surfaces in this class. Presently the 
smallest known dynamical degree of an automorphism of an Enriques surface is the one 
constructed by Dolgachev  \cite[Table~2]{dolgachev16}, who found an 
automorphism of an Enriques surface  (of Hesse type) of dynamical degree $\lambda_D=2.08101\ldots$
(see Example~\ref{Dolgachevexample} for more details).  
As to this respect, in Example \ref{MO15family}, we give an additional study on the 
family of \cite{MO15} to show that all nontrivial dynamical degrees of automorphisms of Enriques surfaces in \cite{MO15} are at least $\lambda_D$. The result thus fails to give 
a new lower bound, but gives a good account for what is going on.

To fill the gap, the constraint given by Thm~\ref{mod2} works 
to give the following slightly better theoretical lower bound:
\begin{corollary} \label{th2}
The dynamical degree of an automorphism of an Enriques surface is greater than or equal to
the Salem number $\lambda=1.35098\cdots$ given by  the polynomial
\[x^{10}-x^9-x^6+x^5-x^4-x+1.\]
\end{corollary}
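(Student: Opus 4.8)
The plan is to combine Theorem~\ref{mod2} with two standard facts about an Enriques surface automorphism $f$, which I will assume has positive entropy (otherwise $\lambda(f)=1$ and there is nothing to prove). First, $\lambda(f)$ is the spectral radius of the isometry $f^*$ of the hyperbolic even unimodular lattice $\mathrm{Num}(S)$, which has rank $10$ (see Section~\ref{sect-dynamical-degrees}). Second, since $f^*$ preserves a lattice of signature $(1,9)$ and $\lambda(f)>1$, its characteristic polynomial is a monic reciprocal integer polynomial of degree $10$ that factors as a product of cyclotomic polynomials times the minimal polynomial $s_\lambda$ of $\lambda(f)$, and this $s_\lambda$ is a Salem polynomial of even degree $d\le 10$. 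So the first step is to reduce the corollary to the purely number-theoretic statement: the smallest Salem number whose minimal polynomial has degree at most $10$ and satisfies the conclusion of Theorem~\ref{mod2} is $\lambda=1.35098\ldots$, with minimal polynomial $P(x)=x^{10}-x^9-x^6+x^5-x^4-x+1$.

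Next I would delimit the candidate reductions modulo $2$. Since $s_\lambda$ is reciprocal of even degree and each of $F_3,F_5,F_7,F_9,F_{15}$ has even degree while $\deg F_1=1$, in the factorization $s_\lambda\equiv F_1^{a_1}F_3^{a_3}F_5^{a_5}F_7^{a_7}F_9^{a_9}F_{15}^{a_{15}}\pmod 2$ one has $a_1$ even and $a_1+2a_3+4a_5+6a_7+6a_9+8a_{15}=d\le 10$. Enumerating the finitely many solutions gives a short explicit list of the possible mod-$2$ reductions. In particular $F_3^2F_7$ lies on this list, and one computes $P(x)\bmod 2 = x^{10}+x^9+x^6+x^5+x^4+x+1 = F_3^2F_7$, so $P$ is consistent with Theorem~\ref{mod2}. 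One also records that $P$ is genuinely a Salem polynomial: it is reciprocal, irreducible over $\Q$, and (checking the real roots and locating the rest) has exactly two real roots $\lambda^{\pm 1}$ with $\lambda=1.35098\ldots$ and eight roots on the unit circle.

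The heart of the argument is then the finite verification that every Salem number $\mu<1.35098\ldots$ with $\deg s_\mu\le 10$ violates Theorem~\ref{mod2}. Such $\mu$ necessarily have degree $8$ or $10$, since the smallest Salem numbers of degrees $4$ and $6$ (about $1.72$ and $1.40$) already exceed $1.35098\ldots$. The list of all such $\mu$ is finite and explicit: it may be extracted from the known tables of small Salem numbers, or produced directly, because bounding $\mu$ bounds $\mu+\mu^{-1}$ and hence, for each fixed degree, all coefficients of $s_\mu$. For each of these finitely many $s_\mu$ one computes $s_\mu\bmod 2$ and checks that it is not one of the admissible reductions isolated above; all of them fail. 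The Salem numbers of smallest value here were already ruled out in \cite[Remark~4.4]{third}, and the new input of Theorem~\ref{mod2} is precisely what removes the remaining degree-$8$ and degree-$10$ Salem numbers below $1.35098\ldots$.

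The conceptual steps are routine; the main obstacle I expect is the completeness of the enumeration in the previous step, i.e.\ being certain that no degree-$8$ or degree-$10$ Salem number below $1.35098\ldots$ has been overlooked. This requires either invoking the classification of small Salem numbers in these low degrees or carrying out the finite search over totally real integers $\mu+\mu^{-1}$ having a single conjugate outside $[-2,2]$ and all others inside it. Once that list is secured, the remaining mod-$2$ factorizations are a mechanical check against the admissible list.
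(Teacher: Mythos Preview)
Your proposal is correct and follows essentially the same route as the paper: reduce to Salem numbers of degree at most $10$ via $\rank\mathrm{Num}(S)=10$, list the finitely many Salem numbers below $1.35098\ldots$ in those degrees (none in degree $\le 6$, one in degree $8$, six in degree $10$), and verify that each of their minimal polynomials has a mod~$2$ factor not permitted by Theorem~\ref{mod2}. The only addition you make is the (unnecessary but reassuring) check that $P(x)\bmod 2=F_3^2F_7$ is itself admissible, which confirms that Theorem~\ref{mod2} alone cannot push the bound further; the paper records this same factorization in the first numbered line of the Appendix table.
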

We use Theorem~\ref{mod2} together with \cite{Gross--McMullen}, combined with Dolgachev's example \cite[Table~2]{dolgachev16}
to show that the smallest (non-trivial) dynamical degree
of an automorphism of an Enriques surface must be one of the 39 Salem numbers 
which we list in $\S.$\ref{sect-appendix}~Appendix.

Our approach is inspired by Oguiso's proof of \cite[Thm~1.2]{third}. 
We consider mod 2 reduction of the cohomological action, and apply results from \cite{Gross--McMullen} to obtain a detailed picture.
It should be noted that our approach does rule out numerous Salem numbers (see Example~\ref{rem-possibilities}), but 
Thm~\ref{mod2} cannot  lead to a necessary and sufficient condition 
for a Salem number to be the  dynamical degree of an automorphism  of an Enriques surface.
One possible way of determining the exact minimal value  of non-trivial dynamical degrees
of automorphisms of Enriques surfaces will be that the 
remaining $39$ cases can be treated efficiently by some refinement of McMullen's method \cite{mcmullen16},
but this task exceeds the scope of this paper.

\noindent
{\bf Convention:} 
In this note
we work over the field of complex numbers $\C$. 

\section{Proof of Theorem \ref{th1}} \label{sect-th1}

We maintain the notation of the introduction: $S$ is assumed to be  an Enriques surface and $f\in \mathrm{Aut}(S)$ is an automorphism. 
As is well-known, the canonical cover 
$\tilde{S}=\mathrm{Spec}(\mathcal{O}\oplus \mathcal{O}(K_S))$ is a $K3$ surface
and the morphism $\pi: \tilde{S} \rightarrow S$  is a double \'etale cover. 
We denote 
by $\varepsilon$ the covering involution of $\pi$. 
Since the automorphism $f$ preserves the canonical class $K_S\in \mathrm{Pic}(S)$, $f$ lifts 
to an automorphism $\tilde{f}$ of $\tilde{S}$. 

Recall that for an Enriques surface the lattice $\mathrm{Num}(S)$ is  the free 
part of the cohomology group  $H^2(S,\Z)$. Let $$M:=H^2(\tilde{S},\Z)^{\varepsilon}$$ be the $\varepsilon$-invariant sublattice of the cohomology lattice $H^2(\tilde{S},\Z)$
and let $N:= M^\perp$
be its orthogonal complement. The direct orthogonal sum 
$M \oplus N$ is a finite index sublattice of the  lattice $H^2(\tilde{S},\Z)$.
Moreover,  we know by 
\cite[Proposition (2.3)]{namikawa} that 
$M$ coincides with the pullback of $H^2(S,\Z)$ by $\pi$, 
hence we have the isomorphisms
\begin{equation} \label{eq-enriques-isomorphisms}
M \simeq \mathrm{Num}(S)(2) \simeq U(2)\oplus E_8(2) \text{ and } N\simeq U\oplus U(2)\oplus E_8(2),
\end{equation}
where $U$ denotes the unimodular hyperbolic plane and $E_8$ is the unique even unimodular
negative-definite lattice of rank 8. Moreover, for a lattice $L$ and  $n\in \Q$, $L(n)$ denotes 
the lattice whose underlying abelian group is the same as $L$ and the bilinear form is 
multiplied by $n$.  Basic facts concerning integral symmetric bilinear forms can be found in \cite{nikulin-sym}.

Since the lift $\tilde{f}$ commutes with the involution $\varepsilon$, the map it induces on the cohomology lattice 
 preserves sublattices $M$ and $N$. 
We put 
$$
\tdfm :=  \tilde f^* \rvert_M \mbox{ and } \tdfn := \tilde f^* \rvert_N  \, .
$$
Obviously, $\tdfn$ induces an isometry of the quadratic space $N \otimes \R$ of signature $(2,10)$ that 
preserves the original lattice $N =: N_{\Z} \subset N \otimes \R$
 and the Hodge structure of $N$. Since the latter is exactly given by an oriented positive 2-plane in $N \otimes \R$, we have
\[\tdfn \in O(N_{\Z})\cap (O(2)\times O(10)).\]
Since the right-hand side is a discrete subgroup in a compact group, we obtain 
the  following well-known fact. 
\begin{lemma}\label{Nfin}
The map $\tdfn$ is of finite order.
\end{lemma}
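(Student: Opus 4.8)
The plan is to realize $\tdfn$ as an element of a finite group, using that it preserves \emph{both} the integral lattice $N$ and a definite orthogonal decomposition of the real quadratic space $N\otimes\R$.

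Recall from the discussion above that $\tdfn=\tilde f^*|_N$ respects the Hodge structure of $N$; equivalently, it preserves the oriented positive-definite $2$-plane $P\subset N\otimes\R$ cut out by $H^{2,0}(\tilde S)\oplus H^{0,2}(\tilde S)$. (One could instead take any $\tdfn$-invariant positive-definite $2$-plane; the period plane is simply the canonical choice.) Being an isometry of the form of signature $(2,10)$, $\tdfn$ then also preserves the negative-definite orthogonal complement $P^{\perp}$, of rank $10$. Replacing the bilinear form by its negative on $P^{\perp}$ while leaving it unchanged on $P$ yields a positive-definite form $q$ on $N\otimes\R\cong\R^{12}$ for which $\tdfn$ is still orthogonal; thus $\tdfn$ lies in the compact group $O(N\otimes\R,q)\cong O(12)$. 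This is exactly the inclusion $\tdfn\in O(N_{\Z})\cap(O(2)\times O(10))$ recorded above.

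On the other hand, $\tdfn\in O(N_{\Z})$ maps the lattice $N$ isomorphically onto itself, so after fixing a $\Z$-basis of $N$ we may view $O(N_{\Z})$ as a subgroup of $\mathrm{GL}_{12}(\Z)$, hence a discrete subgroup of $\mathrm{GL}_{12}(\R)$. The intersection $O(N_{\Z})\cap O(N\otimes\R,q)$ is therefore a closed and discrete subset of the compact group $O(N\otimes\R,q)$, hence finite. In particular the cyclic group $\bracket{\tdfn}$ is finite, i.e.\ $\ord(\tdfn)<\infty$.

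This argument is routine and presents no real obstacle; the one point that deserves a word is why the ambient orthogonal group may be taken compact, and this is guaranteed by the definiteness of both summands $P$ and $P^{\perp}$, which in turn follows from the signature $(2,10)$ together with $\tdfn$-invariance of the period plane. (The genuinely substantive work — pinning down the precise list of possible orders — is what Theorem~\ref{th1} will do afterwards.)
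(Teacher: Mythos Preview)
Your argument is correct and follows essentially the same route as the paper: both observe that $\tdfn$ preserves the lattice $N_{\Z}$ together with the positive $2$-plane coming from the Hodge structure, hence lies in the discrete-in-compact intersection $O(N_{\Z})\cap(O(2)\times O(10))$, which is finite. You have simply unpacked the ``discrete subgroup in a compact group'' step in slightly more detail.
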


By Lemma \ref{Nfin}, the characteristic polynomial of $\tdfn$ is a product
\begin{equation} \label{eq-charpolfac}
\hhh_N(x)=\det (xI-\tdfn) = \prod_{i=1}^k \Phi_{n_i}(x) 
\end{equation} 
  of cyclotomic polynomials $\Phi_{n_i}(x)$ for a collection of 
positive integers $\{n_i :i=1, \ldots, k\}$ with $\sum_{i=1}^k \pphi(n_i) = 12$, where $\pphi(\cdot)$ stands for the Euler totient function. 
Obviously, the order of $\tdfn$ is just the least common multiple
\begin{equation} \label{eq-ord-tdfn}
\ord(\tdfn) = \lcm\{n_i, i = 1, \ldots,k\}.
\end{equation}

The integers $m$ for which $\pphi(m)\leq 12$ are as follows.
\begin{center}\begin{tabular}{c|l} \label{table-pphi}
$\pphi(m)$ & $m$ \\
\hline
$12$ & $13,21,26,28,36,42$ \\
$10$ & $11,22$             \\
$ 8$ & $15,16,20,24,30$    \\
$ 6$ & $7,9,14,18$         \\
$ 4$ & $5,8,10,12$         \\
$ 2$ & $3,4,6$             \\
$ 1$ & $1,2$               \\
\end{tabular}\end{center}

The proof of Thm~\ref{th1} will be based on the following lemmas. 
\begin{lemma} \label{lem1-new}
Let $\hhh_N(x)$ be the characteristic polynomial of the map $\tdfn$.

\noindent
{\rm (a)} The reduction $(\hhh_N(x) \bmod 2)$ is divisible by $(x^2+1)=(x+1)^2$.

\noindent
{\rm (b)} $\hhh_N(1) \hhh_N(-1)$ is either zero or a square in $\mathbb{Q}^{*}$.
\end{lemma}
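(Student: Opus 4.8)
The plan is to prove the two parts by quite different means.

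\emph{Part (a): reduction modulo $2$.} I would use the decomposition $N\simeq U\oplus L$ with $L:=U(2)\oplus E_8(2)$ coming from \eqref{eq-enriques-isomorphisms}. Set $\bar N:=N/2N$, an $\F_2$-space of dimension $12$ carrying the reduced bilinear form $\bar b$ together with the quadratic refinement $\bar q(v):=\tfrac12\langle v,v\rangle\bmod 2$, both preserved by the reduction $\bar g$ of $\tdfn$ (an isometry preserves norms). Since $L\perp U$, since all inner products inside $L$ lie in $2\Z$, and since all norms in $L$ lie in $4\Z$, the radical of $\bar b$ is exactly $\bar L$: it contains $\bar L$, and $\bar b$ is non-degenerate on $\bar U\cong U/2U$. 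This radical is $\bar g$-invariant of dimension $10$, and $\bar q$ vanishes on it, so $\bar q$ descends to $\bar N/\bar L\cong\bar U$, which is the hyperbolic plane over $\F_2$. Hence $\bar g$ acts on $\bar N/\bar L$ through $O^+(2,\F_2)$, a group of order $2$ whose two elements (the identity and the swap of the two isotropic lines) both have characteristic polynomial $(x+1)^2=x^2+1$ over $\F_2$. Since characteristic polynomials multiply along the $\bar g$-invariant filtration $0\subset\bar L\subset\bar N$, this yields $(x^2+1)\mid(\hhh_N(x)\bmod 2)$.

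\emph{Part (b): an alternating form.} If $\hhh_N(1)=0$ or $\hhh_N(-1)=0$ the product is $0$, so assume $g:=\tdfn$ has neither $1$ nor $-1$ as an eigenvalue. Both $g$ and $g^{-1}$ are (finite-order) isometries of $N$, so
\[
B(x,y):=\langle (g-g^{-1})x,\ y\rangle
\]
is a $\Z$-valued bilinear form on $N$; because $g$ is an isometry it is alternating (in particular $B(x,x)=\langle gx,x\rangle-\langle x,gx\rangle=0$), and it is non-degenerate since $\det(g-g^{-1})=\det(g)^{-1}\det(g^2-I)\ne 0$. Thus the Gram matrix of $B$ in a $\Z$-basis of $N$ is a non-degenerate skew-symmetric integral $12\times 12$ matrix, whose determinant is the square of an integer (its Pfaffian). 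On the other hand this determinant equals $\det(g-g^{-1})\cdot\disc(N)$, and $\det(g-g^{-1})=\pm\,\hhh_N(1)\hhh_N(-1)$ (expand over the eigenvalues $\zeta_i$ of $g$, using $\zeta_i-\zeta_i^{-1}=\zeta_i^{-1}(\zeta_i^2-1)$ and $\prod_i\zeta_i=\det g=\pm1$), whereas $\disc(N)=\det(U)\det(U(2))\det(E_8(2))=2^{10}$ is a perfect square by \eqref{eq-enriques-isomorphisms}. Hence $\pm\,\hhh_N(1)\hhh_N(-1)$ is a square in $\Q$. Finally, $g$ has no real eigenvalue, so its eigenvalues fall into conjugate pairs and
\[
\hhh_N(1)\hhh_N(-1)=\prod_i(\zeta_i^2-1)=\prod_{\{\zeta,\bar\zeta\}}|\zeta^2-1|^2>0,
\]
which forces the sign to be $+$, so $\hhh_N(1)\hhh_N(-1)\in(\Q^{*})^2$.

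\emph{Where the difficulty lies.} The computations are routine; the content is in choosing the right frameworks. In (a) the key point is that one must retain the $\F_2$-quadratic form $\bar q$, not merely $\bar b$: otherwise $\bar g$ would only be forced into $O(\bar N/\bar L,\bar b)=GL_2(\F_2)$, which also contains elements of order $3$ (with characteristic polynomial $x^2+x+1$), and it is precisely the evenness of $N$ --- concretely, the divisibility by $4$ of the norms of $U(2)\oplus E_8(2)$, which is what makes $\bar q$ descend --- that excludes them. In (b) the crux is the classical device $B(x,y)=\langle(g-g^{-1})x,y\rangle$ combined with the convenient fact that $\disc(N)$ is a perfect square; once these are in place the statement follows with only a little sign bookkeeping.
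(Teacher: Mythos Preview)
Your argument is correct in both parts.

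For part (a) your approach is essentially the paper's, only packaged a little differently. The paper works in $(1/2)N/N$ and first identifies the $10$-dimensional invariant subspace $N^*/N$, which under the isomorphism $(1/2)N/N\cong N/2N$ is exactly your radical $\bar L$. It then produces an invariant $11$-dimensional subspace $N^{*,+}$ by observing that $\delta(N)=0$ makes the quadratic form on the $4$-element quotient $(1/2)N/N^*$ well-defined, with a unique nonzero vector of value $1/2$; the resulting filtration forces two factors of $(x+1)$. You instead descend $\bar q$ to the quotient hyperbolic plane and compute $O^+(2,\F_2)=\{1,\text{swap}\}$ directly. Both arguments rest on the same key observation you emphasize: one must keep the quadratic refinement (equivalently, use $\delta(N)=0$, i.e.\ the $4$-divisibility of norms in $U(2)\oplus E_8(2)$), since the bilinear form alone would allow an order-$3$ action on the quotient.

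For part (b) the routes are genuinely different. The paper simply cites \cite[Proposition~5.1]{BF}. Your direct argument via the alternating form $B(x,y)=\langle (g-g^{-1})x,y\rangle$ and its Pfaffian is self-contained and elementary; it works here precisely because $\disc(N)=2^{10}$ is already a perfect square, so the Pfaffian identity immediately yields that $\pm\,\hhh_N(1)\hhh_N(-1)$ is a rational square, and your positivity computation over the conjugate pairs of eigenvalues fixes the sign (and incidentally forces $\det\tdfn=+1$ in this case). The citation in the paper buys brevity and places the fact in a general framework; your approach buys transparency and removes the external dependence.
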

\begin{proof} (a) Let us consider the action of $\tdfn$
on the reduction $N\otimes \mathbb{F}_2 \cong (1/2)N/N$.
Obviously, the reduction contains the 
10-dimensional $\tdfn$-invariant subspace 
\begin{equation} \label{eq-10dim}
N^*/N \subset(1/2)N/N, 
\end{equation} 
where $N^*/N$ is the discriminant group of $N$.
Thus the reduction $(\hhh_N(x) \bmod 2)$ is divisible by a  degree two polynomial.

In fact, there exists an $11$-dimensional canonical subspace $N^{*,+}$ of the reduction $(1/2)N/N$
containing $N^*/N$. We discuss as follows. 
The residue group $(1/2)N/N^*$ consists of four residue classes modulo $N^*$ and $N^*$ 
has the property that for all $y\in N^*$, $(y,y)\in \Z$ (namely $\delta (N)=0$ in Nikulin's notation.) Hence, the induced quadratic form $(1/2)N/N^*\rightarrow \Q/\Z$ is well-defined.
Among the four residue classes, there exists a unique nonzero element whose form value is $1/2$,
which corresponds to $N^{*,+}$. (The idea of this proof parallels \cite{allcock}). Thus we obtain
(a). 

(b) Claim follows from \cite[Proposition~5.1]{BF}. 
\end{proof} 

\begin{lemma} \label{lem-yuya}
The order $\mathrm{ord}(f_N)$ cannot be one of the integers $11, 22, 35, 70$. 
\end{lemma}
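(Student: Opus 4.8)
The plan is to rule out each of the four orders $11,22,35,70$ by combining the constraint $\sum_{i=1}^k\pphi(n_i)=12$ coming from \eqref{eq-charpolfac} with the divisibility statement of Lemma~\ref{lem1-new}(a) and, where necessary, the square condition of Lemma~\ref{lem1-new}(b). First I would observe that if $\ord(\tdfn)=\lcm\{n_i\}$ is divisible by $11$ (the cases $11$ and $22$) then some $n_i$ must be a multiple of $11$; scanning the table of values of $\pphi$, the only options with $\pphi(n_i)\le 12$ are $n_i\in\{11,22\}$, each contributing $\pphi=10$ to the total of $12$. The remaining Euler-phi budget is exactly $2$, so the only way to complete the characteristic polynomial is to append factors from $\{\Phi_3,\Phi_4,\Phi_6\}$ (a single factor of phi-degree $2$) or $\Phi_1^2,\Phi_1\Phi_2,\Phi_2^2$ (two linear factors). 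In each such case I would write down $\hhh_N(x)\bmod 2$ explicitly: modulo $2$ we have $\Phi_{11}\equiv\Phi_{22}\equiv F_{11}$ with $F_{11}$ of degree $10$, $\Phi_3\equiv\Phi_6\equiv x^2+x+1$, $\Phi_4\equiv x^2+1=(x+1)^2$, and $\Phi_1\equiv x+1$, $\Phi_2\equiv x+1$. Lemma~\ref{lem1-new}(a) demands divisibility by $(x+1)^2$, which forces the degree-$2$ tail to be $(x+1)^2$, i.e.\ either two linear factors or $\Phi_4$. This pins the characteristic polynomial down to the two candidates $\Phi_{11}\Phi_a\Phi_b$ and $\Phi_{22}\Phi_a\Phi_b$ (with $\{a,b\}\subset\{1,2\}$ or $\{4\}$); then I would evaluate $\hhh_N(1)\hhh_N(-1)$ and check via Lemma~\ref{lem1-new}(b) that the resulting product is a non-square in $\mathbb{Q}^*$ — using $\Phi_{11}(1)=11$, $\Phi_{11}(-1)=1$, $\Phi_{22}(1)=1$, $\Phi_{22}(-1)=11$, so that the value of $\hhh_N(1)\hhh_N(-1)$ carries an odd single power of $11$ and hence cannot be a square — yielding a contradiction.

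For the orders $35$ and $70$ the argument is similar but uses the arithmetic of $\pphi$ more sharply. If $\ord(\tdfn)$ is a multiple of $35$, then (since $35=5\cdot7$) the multiset $\{n_i\}$ must contain an index divisible by $5$ and one divisible by $7$ — or a single index divisible by $35$; but $\pphi(35)=24>12$, so that is impossible, and we genuinely need separate factors $\Phi_{n_i}$, $\Phi_{n_j}$ with $5\mid n_i$ and $7\mid n_j$. From the $\pphi$-table the smallest phi-contribution of an index divisible by $7$ is $\pphi(7)=6$, and of one divisible by $5$ is $\pphi(5)=4$; since $6+4=10\le 12$ this is not immediately excluded by the degree count alone, so I would enumerate: the index divisible by $7$ lies in $\{7,9\text{(no)},14,\ldots\}$, in fact $\{7,14\}$ are the only ones with $\pphi=6$ (and $\pphi(21)=\pphi(28)=12$ would use the entire budget leaving no room for a factor divisible by $5$), and the index divisible by $5$ lies in $\{5,10\}$ (phi $4$) — $\pphi(15)=\pphi(20)=\pphi(30)=8$, again enumerable. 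In every combination the leftover phi-budget ($12-10=2$ when using $\{7\text{ or }14\}$ and $\{5\text{ or }10\}$, or $0$ in the phi-$6$-plus-phi-$8$ cases) is too small or forces a tail incompatible with Lemma~\ref{lem1-new}(a): the $(x+1)^2$ divisibility requires either a $\Phi_4$ or a $\Phi_1^2$-type tail, and one checks case by case that the remaining budget cannot supply this on top of $\Phi_7$ (or $\Phi_{14}$) and $\Phi_5$ (or $\Phi_{10}$) simultaneously. I would then finish the surviving cases with the square obstruction Lemma~\ref{lem1-new}(b), computing $\hhh_N(\pm1)$ from the known values $\Phi_5(1)=5$, $\Phi_7(1)=7$, etc., and exhibiting a prime ($5$ or $7$) occurring to an odd power in $\hhh_N(1)\hhh_N(-1)$. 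The case $70=2\cdot5\cdot7$ reduces to that of $35$ since any multiset realizing $\ord=70$ also realizes (after forgetting) a multiple-of-$35$ structure, so no new work is needed beyond possibly one extra linear factor, handled identically.

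The main obstacle I anticipate is purely bookkeeping: making sure the enumeration of multisets $\{n_i\}$ with $\sum\pphi(n_i)=12$ and the prescribed divisibility by $5$, $7$, or $11$ is exhaustive, and then checking for each survivor both the mod-$2$ factorization (against $(x+1)^2$) and the square condition. None of the individual checks is hard — each is a short computation with cyclotomic polynomial values at $\pm1$ — but there are a dozen or so cases and it is easy to miss one. A clean way to organize this is to note that $\Phi_{11}$, $\Phi_7$, $\Phi_5$ all reduce mod $2$ to polynomials with \emph{no} repeated roots and not divisible by $(x+1)$ (indeed $F_5=F_5(x)$ and $F_7=F_7(x)$ in the notation of Theorem~\ref{mod2}, and one checks $F_{11}$ likewise), so \emph{all} of the required $(x+1)^2$ must come from the complementary factors; this immediately caps how much phi-budget is available for the "large" cyclotomic factor and makes the enumeration very short. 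I expect the whole proof to be about a page.
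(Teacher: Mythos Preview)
Your approach has a genuine gap: the square obstruction from Lemma~\ref{lem1-new}(b) is vacuous in every single case you need it for. Recall that Lemma~\ref{lem1-new}(b) says $\hhh_N(1)\hhh_N(-1)$ is \emph{either zero or} a square. Now run through your own enumeration. For $\ord(\tdfn)=11$ the only multiset with $\lcm=11$ and total Euler-phi $12$ is $\{11,1,1\}$, so $\hhh_N=\Phi_{11}\Phi_1^2$ and $\hhh_N(1)=0$. For $\ord(\tdfn)=22$ every admissible multiset contains $\Phi_1$ or $\Phi_2$ (the tail of phi-degree $2$ must come from $\{1,2\}$ since all $n_i$ divide $22$), so again $\hhh_N(1)=0$ or $\hhh_N(-1)=0$. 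For $\ord(\tdfn)=35$ the unique option is $\Phi_5\Phi_7\Phi_1^2$, giving $\hhh_N(1)=0$; and for $\ord(\tdfn)=70$ the tail is likewise forced to be two factors from $\{\Phi_1,\Phi_2\}$, so the product vanishes. In none of these cases does $\hhh_N(1)\hhh_N(-1)$ ``carry an odd single power of $11$'' (or of $5$ or $7$): it is simply zero, and Lemma~\ref{lem1-new}(b) gives no contradiction. Lemma~\ref{lem1-new}(a) is also satisfied in all of these cases, since the linear tail supplies exactly the required $(x+1)^2$ modulo $2$.

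The paper's proof avoids this trap by passing to the \emph{other} lattice. It uses that the discriminant groups $M^*/M\cong N^*/N$ are canonically identified and $10$-dimensional over $\F_2$, so $(p_M\bmod 2)$ equals the characteristic polynomial on $N^*/N$, which in turn divides $(\hhh_N\bmod 2)$. In the cases at hand this forces $(p_M\bmod 2)=F_5F_7$ (for $35,70$) or $F_{11}$ (for $11,22$); from the resulting parity pattern of the coefficients of the self-reciprocal degree-$10$ polynomial $p_M$ one computes $p_M(1)p_M(-1)\equiv 3\pmod 8$, contradicting the square condition of \cite[Proposition~5.1]{BF} applied to the isometry $\tdfm$ of $M$. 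The essential point is that $p_M$ is degree $10$ and there is no reason for $p_M(\pm1)$ to vanish---indeed the mod-$8$ computation shows it does not---whereas your $\hhh_N$ is degree $12$ and the extra two linear factors kill the values at $\pm1$.
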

\begin{proof} We put $p_M(x) = \sum_{i=0}^{10} a_i x^i$ (resp. $p_{N^*/N}(x)$) to denote the characteristic polynomial of $\tdfm$ on $M$ (resp. of the map induced by $\tdfn$ on the discriminant group $N^*/N$).
Since $M \oplus N$ is a finite index sublattice of the unimodular lattice $H^2(\tilde{S}, \Z)$
we have a canonical isomorphism 
of the discriminant groups
$$M^*/M \cong N^*/N \, , $$
which is $(\tdfm,\tdfn)$-equivariant. 
Moreover, from $M^*/M = \frac{1}{2}M/M$ and the inclusion \eqref{eq-10dim}  we infer
\begin{equation} \label{eq-pnstarn}
p_{N^*/N}(x) = (p_M(x)  \bmod 2) \quad \mbox{and} \quad p_{N^*/N}(x) \divides    (p_N(x) \bmod 2) .   
\end{equation}

Assume $\mathrm{ord}(f_N) \in \{35, 70\}$. Then, by \eqref{eq-ord-tdfn} and the table  on p.~\pageref{table-pphi}  we have 
\begin{equation} \label{eq-pnx}
p_N(x) = \Phi_{5 l_1}(x) \Phi_{7 l_2}(x) \Phi_{l_3}(x) \Phi_{l_4}(x) \quad \mbox{ with }  l_1, l_2, l_3, l_4  \in \{1,2\} \, .
\end{equation}
Thus \eqref{eq-pnstarn} implies that 
$$
(p_M(x)  \bmod 2) = p_{N^*/N}(x) = F_5(x) F_7(x) = x^{10} + x^8 + x^6 + x^5 + x^4 + x^2 + 1 , 
$$
so the coefficient $a_5$ and the sums $a_0 + a_2 + a_4$, $a_6 + a_8 + a_{10}$ are  odd integers.
In particular, 
the   polynomial $p_M(x)$ is self-reciprocal (see e.g. \cite{cantat}), so the product  $p_M(1)p_M(-1)$
can be expressed as
\begin{eqnarray*}
 \bigl(\sum_{i:even} a_i\bigr)^2 - \bigl(\sum_{i:odd}a_i\bigr)^2 &=& (2(a_0 + a_2 + a_4))^2 - (2(a_1 + a_3) + a_5)^2   \\ 
                 &=&   (2 \bmod 4)^2 - (1 \bmod 2)^2  
\end{eqnarray*}
Thus  $p_M(1)p_M(-1)  \equiv 3 \bmod 8$, which contradicts \cite[Proposition~5.1]{BF}. 

If  $\mathrm{ord}(f_N) \in \{11, 22\}$, then the factorization of $p_N(x)$ is as follows
\begin{equation} \label{eq-pnx11}
p_N(x) = \Phi_{11 l_1}(x) \Phi_{l_2}(x) \Phi_{l_3}(x) \quad \mbox{ with }  l_1, l_2, l_3  \in \{1,2\} \, .
\end{equation}
Thus we have 
$(p_M(x)  \bmod 2) = F_{11}(x) $ and, as in the previous case we obtain  $p_M(1)p_M(-1)  \equiv 3 \bmod 8$.
\end{proof}

\begin{remark}
The proof of Lemma~\ref{lem-yuya} shows that the characteristic polynomial  $p_M(x) = \sum_{i=0}^{10} a_i x^i$ of $\tdfm$ cannot be a polynomial such that 
the coefficient $a_5$ and the sums $a_0 + a_2 + a_4 = a_6 + a_8 + a_{10}$ are  odd integers, i.e. the modulo 2 reduction $(p_M(x)  \bmod 2)$ cannot be one of the polynomials
$$F_5(x) F_7(x), F_{11}(x), 
F_3(x)^5, F_3(x)^2 F_9(x), F_3(x) F_5(x)^2, F_3(x) F_{15}(x) \, .$$
\end{remark}

The next lemma rules out the first two lines of the table on p.~\pageref{table-pphi}. 
It is also of use in the next section.
\begin{lemma} \label{lem12}
If 
$\Phi_{m}(x)$ comes up in the factorization \eqref{eq-charpolfac}, then
$$
\pphi(m) < 10 \, .
$$
\end{lemma}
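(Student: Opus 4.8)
The plan is to argue by contradiction. Suppose some $\Phi_m(x)$ with $\pphi(m)\ge 10$ occurs in \eqref{eq-charpolfac}. Since $\deg \hhh_N = 12 = \sum_i \pphi(n_i)$, only two situations are possible: either $\pphi(m)=12$, so that $\hhh_N(x)=\Phi_m(x)$ with $m\in\{13,21,26,28,36,42\}$; or $\pphi(m)=10$, so that $\hhh_N(x)=\Phi_m(x)\,g(x)$ with $m\in\{11,22\}$ and $g(x)$ a monic product of cyclotomic polynomials of total degree $2$. I would rule out all these possibilities by confronting Lemma~\ref{lem1-new}(a) --- which forces $(x+1)^2$ to divide $\hhh_N(x)\bmod 2$ --- with explicit modulo $2$ reductions of cyclotomic polynomials, and then settle a single residual case using Lemma~\ref{lem1-new}(b) together with Lemma~\ref{lem-yuya}.

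In the case $\pphi(m)=12$ I would observe that for each of $m=13,21,26,28,36,42$ the value $\Phi_m(1)$ is odd: indeed $\Phi_{13}(1)=13$, while for the other five values $m$ is not a prime power and hence $\Phi_m(1)=1$. Therefore $(x+1)\nmid(\Phi_m(x)\bmod 2)$, so a fortiori $(x+1)^2\nmid(\hhh_N(x)\bmod 2)$, contradicting Lemma~\ref{lem1-new}(a).

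In the case $\pphi(m)=10$, note first that $2$ has order $10$ modulo $11$, so $\Phi_{11}(x)\bmod 2=\Phi_{22}(x)\bmod 2=F_{11}(x)$ is irreducible of degree $10$, and since $F_{11}(1)=11$ is odd it is coprime to $x+1$. Thus Lemma~\ref{lem1-new}(a) forces $(x+1)^2\mid(g(x)\bmod 2)$, and as $\deg g=2$ this means $g(x)\equiv(x+1)^2\pmod 2$. Checking the cyclotomic products of degree $2$, namely $\Phi_3,\Phi_4,\Phi_6,\Phi_1^2,\Phi_1\Phi_2,\Phi_2^2$, one has $\Phi_3\equiv\Phi_6\equiv x^2+x+1$ whereas the other four reduce to $(x+1)^2$; hence $g\in\{\Phi_4,\Phi_1^2,\Phi_1\Phi_2,\Phi_2^2\}$. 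If $g$ involves $\Phi_1$ or $\Phi_2$, then by \eqref{eq-ord-tdfn} we get $\ord(\tdfn)=\lcm(m,1,2,\dots)\in\{11,22\}$, which is excluded by Lemma~\ref{lem-yuya}. The only survivor is $\hhh_N(x)=\Phi_m(x)\Phi_4(x)$ with $m\in\{11,22\}$; but then, using $\Phi_4(\pm 1)=2$ and $\{\Phi_m(1),\Phi_m(-1)\}=\{1,11\}$, one finds $\hhh_N(1)\hhh_N(-1)=4\cdot 11=44$, a nonzero non-square in $\Q^{*}$, which contradicts Lemma~\ref{lem1-new}(b). This exhausts all cases, and the lemma follows.

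The one genuinely delicate point is the last possibility, $\hhh_N=\Phi_{11}\Phi_4$ (resp.\ $\Phi_{22}\Phi_4$): its order is $44$, which is absent from the list in Lemma~\ref{lem-yuya}, and it also passes the modulo $2$ divisibility test of Lemma~\ref{lem1-new}(a); only the arithmetic constraint on $\hhh_N(1)\hhh_N(-1)$ coming from the discriminant form (Lemma~\ref{lem1-new}(b)) eliminates it. Everything else is routine bookkeeping with cyclotomic polynomials modulo $2$.
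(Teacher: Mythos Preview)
Your argument is correct. For $\pphi(m)=12$ you use essentially the same idea as the paper (Lemma~\ref{lem1-new}(a)); your observation that $\Phi_m(1)$ is odd for the six values of $m$ is in fact a bit slicker than the paper's explicit factorization table.

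For $\pphi(m)=10$ your route genuinely differs. The paper bypasses all the case analysis on the cofactor $g$: once $\Phi_{11}$ or $\Phi_{22}$ occurs, $11$ divides $\ord(\tdfn)$, and replacing $f$ by an appropriate power $f^k$ produces an automorphism of $S$ whose $\tdfn$ has order exactly $11$, which Lemma~\ref{lem-yuya} forbids. This single ``pass to a power'' step handles the $\Phi_4$ case along with the others, so Lemma~\ref{lem1-new}(b) is never needed here. Your approach instead enumerates the possible degree-$2$ cofactors, dispatches the $\Phi_1,\Phi_2$ cases via Lemma~\ref{lem-yuya}, and then eliminates the residual $\hhh_N=\Phi_{11}\Phi_4$ or $\Phi_{22}\Phi_4$ (order $44$) using the square-value constraint of Lemma~\ref{lem1-new}(b). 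Both arguments are valid; the paper's is shorter and uses one fewer ingredient, while yours has the merit of being entirely internal to the single automorphism $f$ without passing to powers.
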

\begin{proof}
If $\pphi(m)=12=\rank N$, then
the characteristic polynomial  $\hhh_N(x)$ equals the cyclotomic polynomial $\Phi_m(x)$.
 The decomposition of $\Phi_m(x) \bmod 2$ into irreducible factors is given in the table below:
\begin{center}\begin{tabular}{l|l}
$m$ & irreducible decomposition of $\Phi_m \bmod 2$ \\
\hline
42,21 & $(x^6+x^4+x^2+x+1)(x^6+x^5+x^4+x^2+1)$ \\
36 & $(x^6 + x^3 + 1)^2$ \\
28 & $(x^3 + x + 1)^2 (x^3 + x^2 + 1)^2$ \\
26,13 & $x^{12} + x^{11} + \cdots + x + 1$ \\
\end{tabular}\end{center}
Thus  $\pphi(m) < 12$ by Lemma~\ref{lem1-new}.a. 

Suppose that $\pphi(m)=10$, namely $m=11,22$. Then $\mathrm{ord}(f_N)$ is a multiple 
of 11, and taking some power we get a contradiction to Lemma \ref{lem-yuya}.
\end{proof}

After these preparations we can give the proof of Thm~\ref{th1}.
\begin{proof}[Proof of Theorem~\ref{th1}] 
Let $p(x) = \prod_{i=1}^k \Phi_{n_i}(x)$ be a product of cyclotomic polynomials for some $n_1$, $\ldots$ $n_k \in \mathbb{N}$ such that   $\sum_{i=1}^k \pphi(n_i) = 12$
and $\pphi(n_i) < 10$ for $i=1, \ldots, k$. If $p(x)$ is the characteristic polynomial of the  map  $\tdfn$ induced by an automorphism of an Enriques surface, then it satisfies
the conditions (a),(b) of Lemma~\ref{lem1-new} and the order of   $\tdfn$  is given by \eqref{eq-ord-tdfn}. 
An enumeration of all cases (by hand or by a help of a computer) and  Lemma~\ref{lem-yuya} show that 
$n =  \lcm\{n_i, i = 1, \ldots,k\}$ is one of the integers that 
appear in  Theorem~\ref{th1}.
\end{proof}

\begin{example} \label{rem-finite-ohashi}
To illustrate Theorem \ref{th1}, we shall give a classification of the case when the order of $f\in \mathrm{Aut}(S)$
is finite. Such automorphisms were classified in \cite{MO1, ohashi-birep}.
The following table gives the complete classification of the pair $(\mathrm{ord} (f),\mathrm{ord} (f_N))$.
\begin{center}\begin{tabular}{c|c|c|c|c|c|c|c}
$\mathrm{ord} (f)$ & 1&2&3&4&5&6&8\\ \hline
$\mathrm{ord} (f_N)$ & 1,2&1,2&3,6&1,2,4&5,10&3,6&4,8\\
\end{tabular}\end{center}
We list here nontrivial examples 
exhibiting the pair $(\mathrm{ord} (f),\mathrm{ord} (f_N))$ and leave the proofs to the reader.
The pair $(2,1)$ is supplied by
No.~18 in \cite{IO}. When $\mathrm{ord} (f)=3$, $5$ or $6$, $f$ is semi-symplectic by 
\cite[Proposition 4.5]{MO1} (i.e. $f$ acts trivially on the space $H^0(S, \mathcal{O}(2K_S))$)
and we can use the symplectic lift to compute eigenvalues. 
Examples 1.1 and 1.2 of \cite{ohashi-birep} give the pairs $(4,4)$ and $(8,8)$.
Finally, Example 1.3 of \cite{ohashi-birep} provides the remaining 
possibilities for $\mathrm{ord} (f)=4$ or $8$. 

As Example~\ref{example-existence-3factors} shows, the order $\mathrm{ord} (f_N)$ is no longer bounded by $10$ when the order of $f \in \mathrm{Aut}(S)$ is infinite
 (see \eqref{eq-order15}). However, we have very few examples:
the question of determining the exact list of possible $\ord(\tdfn)$ remains open.
\end{example}

\section{Dynamical degrees} \label{sect-dynamical-degrees}

We maintain the notation of the previous section. 
For the convenience of the reader we recall the definition of the dynamical degree.  

Let $f \in \mathrm{Aut}(S)$.  The {\bf dynamical degree   $\lambda(f)$ of} $f$ is defined as 
the spectral radius of the map $f^*:\mathrm{Num}(S) \rightarrow  \mathrm{Num}(S)$.
One can show that the map $f^*$ has either none or exactly  two 
eigenvalues away from the unit circle in $\C$. If such two eigenvalues 
come up, they are real and reciprocal. Thus either $\lambda(f) = 1$
or it is the largest real eigenvalue of the map $f^*$ (for a precise discussion of the above notion and its properties see  \cite[$\S$.2.2.2]{cantat}, \cite{mcmullen11}, \cite{mcmullen16} and references therein).

After these preparations we are in position to  give the proof of Thm~\ref{mod2}
(c.f. \cite[proof of Lemma~4.1]{third}).  

\begin{proof}[Proof of Theorem~\ref{mod2}]

We put $p_M$ (resp. $p_N$, resp. $p_f$) to denote the characteristic polynomial of $\tdfm$ on $M$ (resp. $\tdfn$ on $N$, resp. 
 $f^*$ on $\mathrm{Num}(S)$). 
We assume that $\lambda(f) \neq 1$ and denote the minimal polynomial  of 
$\lambda(f)$ by $s_\lambda$.

By the first isomorphism in \eqref{eq-enriques-isomorphisms}, the action of  $f^*$ on the discriminant group 
of the lattice $\mathrm{Num}(S)(2)$ coincides with the action of $\tdfm$ on  the discriminant group   $M^*/M$.
From $M^*/M = \frac{1}{2}M/M$,   we obtain 
the equality of modulo 2 reductions: 
$$p_M \equiv p_f \bmod 2 .$$  
Moreover \eqref{eq-pnstarn}  yields:
$$(p_M  \bmod 2) \divides    (p_N \bmod 2) .$$   

Let $h$ be an irreducible factor of the reduction  $(s_\lambda \bmod 2)$. We have just shown $h$ appears also in the  factorization of $(p_N \bmod 2)$.
Then, from \eqref{eq-charpolfac} and Lemma~\ref{lem12},
$h$ divides $(\Phi_m \bmod 2)$ for certain $m$ such that $\pphi(m) \leq 8$.
Since $(\Phi_{2^e m} \bmod 2)$ is a power of the reduction $(\Phi_m \bmod 2)$, we may assume $m$ to be odd.
Hence, by  the table on p.~\pageref{table-pphi}, we have  $m = 1,3,5,7,9,15$.
Thus either $h = F_m$, with $m=1,3,5,9$ or $h$ appears in the factorization of  $F_m$, where $m=7,15$.
But, for $m=7,15$ we have $F_m = F_{m,1} \cdot F_{m,2}$ where
\begin{align*}
F_{7,1} :=  (x^3 + x + 1),                           
& \quad  F_{7,2}  := (x^3 + x^2 + 1),   \\
F_{15,1} :=   (x^4 + x + 1),            
& \quad F_{15,2}  := (x^4 + x^3 + 1).
\end{align*}
Being a Salem polynomial, $s_\lambda$ is self-reciprocal, and so is its modulo $2$ reduction.
Since the polynomials $F_{7,1}$ and $F_{7,2}$ are not self-reciprocal,
their multiplicities in $(s_\lambda \bmod 2)$ should coincide with those of their reciprocal counterparts. 
The same holds for $F_{15,1}$ and $F_{15,2}$.
This completes the proof.
\end{proof}

It is natural to ask which of the 
six factors given in Thm~\ref{mod2}
do appear in modulo 2 reductions of minimal polynomials of dynamical degrees
of automorphisms of Enriques surfaces. We do not know whether the polynomials $F_{7}(x)$, $F_{9}(x)$, 
$F_{15}(x)$ 
are realized by automorphisms of Enriques surfaces. To answer the question whether $F_m$ where $m=1,3,5$ come up in $(s_{\lambda} \bmod 2)$,
we analyze some 
automorphisms constructed in \cite{dolgachev16}.

\begin{example}  \label{example-existence-3factors}
(a) By \cite[Sect.~4.5, Table~2]{dolgachev16} there exists an Enriques surface $S$ of Hesse type
and $f \in \mathrm{Aut}(S)$ such that the characteristic polynomial $p_{f^*}(x)$ of $f^* \in \mathrm{Aut}(\mathrm{Num}(S))$ 
equals\footnote{There is a misprint in \cite[Sect.~4.5, Table~2]{dolgachev16}: the terms $x^4$, $x^6$ appear with coefficient $6$ in  $p_{f^*}(x)$. 
In \eqref{eq-f5f3} we give the correct formula, but the misprint is irrelevant for us because we consider modulo 2 reduction.}
\begin{equation} \label{eq-f5f3}
p_{f^*}(x) = x^{10} - 6x^9 - 7x^8 - 9x^7 - 6x^6 - 10x^5 - 6x^4 - 9x^3 - 7x^2 - 6x +1 .
\end{equation}
One can  check  $p_{f^*}(x) \in \Z[x]$ is irreducible and  we have
$$
(p_{f^*}(x)  \bmod 2) = F_5(x) \cdot F_3(x) \cdot F_1^4(x).
$$ 
In particular, \eqref{eq-ord-tdfn} combined with the proof of  Thm~\ref{mod2} yields that 
\begin{equation} \label{eq-order15}
15 \divides \ord(\tdfn).
\end{equation}

\noindent
(b)  According to \cite[Sect.~3.2, Case $m = 4$]{dolgachev16} there exists an Enriques surface $S$ and $f \in \mathrm{Aut}(S)$ 
such that $p_{f^*}(x)$ is divisible by the following 
 polynomial
\begin{equation} \label{eq-dolgtocheck}
x^8 - 165x^7 + 223x^6 - 59x^5 - 133x^4 - 59x^3 + 223x^2 - 165x + 1 \, .
\end{equation}
Since the modulo 2 reduction of the above polynomial factors as the product $(F_3(x) \cdot F_9(x))$, this would imply that $F_9(x)$
can also appear in the reduction of the minimal polynomial $s_{\lambda}$. 
Unfortunately, there is a misprint in \cite[Sect.~3.2, Case $m = 4$]{dolgachev16}: the term $x^4$ comes with the coefficient $(-144)$
(instead of  $(-133)$ as in \eqref{eq-dolgtocheck}). Thus the question whether the factor $F_9(x)$ is possible or not 
remains open.
\end{example}

The example below shows that even the direct potential refinement of Thm~\ref{mod2} (i.e. ruling out all/some of the factors  $F_7$, $F_9$, 
$F_{15}$ in the modulo 2 reduction of the characteristic polynomial)  
cannot lead to a necessary and sufficient condition 
for a Salem number to be the  dynamical degree of an automorphism  of an Enriques surface.

\begin{example}  \label{example-strange} Consider  the  Salem number $\lambda = 1.64558...$  given by the polynomial 
\begin{equation} \label{eq-mini-salem}
s_{\lambda} := x^{10}-x^9-x^8-x^2-x+1 \, .
\end{equation}
As one can easily check, we have
$$
(s_{\lambda}  \bmod 2) =  F_3(x) \cdot F_1^8(x).   
$$
Thus  Thm~\ref{mod2}  does not rule out the above number as  the  dynamical degree of an automorphism  of an Enriques surface.  But we can apply \cite[Theorem 6.1]{Gross--McMullen}. 
Suppose that the Salem number given by \eqref{eq-mini-salem} is the dynamical degree of an automorphism of an Enriques surface $S$. 
The lattice $\mbox{Num}(S)$ is of rank 10, so $s_{\lambda}$ is the full characteristic polynomial of the induced  automorphism on $\mbox{Num}(S)$. Also $\mathrm{Num}(S)$ is even unimodular, so by \cite[Theorem 6.1]{Gross--McMullen}  both $|s_{\lambda}(\pm 1)|$ must be squares, which is not the case. 
This contradiction shows that  \eqref{eq-mini-salem} cannot be 
the minimal polynomial of  the dynamical degree of an automorphism of an Enriques surface. 
\end{example}

Presently, the smallest known non-trivial dynamical degree is the one constructed by Dolgachev:
\begin{example} \label{Dolgachevexample}
By  \cite[Sect.~4.5, Table~2]{dolgachev16} there exists an Enriques surface $S$ of Hesse type
and $f \in \mathrm{Aut}(S)$ such that the characteristic polynomial $p_{f^*}(x)$ of $f^* \in \mathrm{Aut}(\mathrm{Num}(S))$
has the Salem polynomial
\begin{equation} \label{eq-mp-lambdaD}
x^4-x^3-2x^2-x+1 
\end{equation}
as a factor. The largest real root of the above polynomial is $$\lambda_D := 2.08101... \, .$$ This
gives the smallest known value of non-trivial  dynamical degree of an automorphism of an Enriques surface.
\end{example}

\begin{example}\label{MO15family}
As an another example, let us consider the Enriques surface $S$ in the family of \cite{MO15}.
In that paper, it is proved that $\mathrm{Aut}(S)\simeq C_2^{*4}\rtimes \mathfrak{S}_4$.
We here prove that every element in this group has dynamical degree at least $\lambda_D$.

In fact, \cite{MO15} exhibits 10 smooth rational curves $E_i,\ E_{ij}\ (i,j=1,\dots,4,\ i\neq j)$
on $S$
which form a rational basis of $\mathrm{Num}(S)$. We denote by $L$ the sublattice generated by them. The symmetric group $\mathfrak{S}_4$ 
acts on the indices of generators, while the generators $s_i$ of the four cyclic groups $C_2$ 
act by reflections in some divisors $G_i\in L$ of self-intersection $-2$. In particular, 
$\mathrm{Aut}(S)$ preserves $L$. We use the relations $(G_i, E_{kl})=0$ (for all $i,k,l$)
and $(G_i, l)\in 2\mathbb{Z}$ for all $l\in L$.

Now from $(G_i, E_{kl})=0$, the six-dimensional subspace $V$ generated by $E_{ij}$ 
is stable under $\mathrm{Aut}(S)$. Since it is negative-definite, we see that any Salem 
number on $S$ has degree at most four, the dimension of the complement to $V$. Thus
for any element in $\mathrm{Aut}(S)$, the characteristic polynomial $F$ decomposes into 
degree four (on $L_{\mathbb{C}}/V_{\mathbb{C}}$) and degree six (on $V_{\mathbb{C}}$).
Moreover, since $G_i$ intersects evenly with all $l\in L$, it acts trivially on $L/2L$. 
Hence the degree four part of $(F \bmod 2)$ decomposes either into linear factors
or has only one non-linear factor $x^2+x+1$, which arises when the residue class has 
order 3 in $\mathfrak{S}_4$. Looking through the list in the Appendix, we get the assertion.
(This is something unfortunate, although.)
\end{example}

We use Theorem~\ref{mod2} on low-degree Salem numbers that do not exceed $\lambda_D$.
\begin{remark} \label{rem-possibilities}
One can check that there are exactly 133 Salem numbers of degree $\leq 10$ up to Dolgachev's record $\lambda_D$.
(The list of small Salem numbers can be found in \cite{mossinghoff}.)
Oguiso's criterion \cite[Lemma~4.1]{third} shows that 28 among them cannot be dynamical degrees of automorphisms 
of Enriques surfaces. To go further, combining Theorem~\ref{mod2} and \cite{Gross--McMullen} we can prove that 65 more cannot be realized by automorphisms
of Enriques surfaces.
For the convenience of readers we  list the 133 Salem numbers in question, their minimal polynomials and their modulo 2 reductions in $\S.$\ref{sect-appendix}~Appendix. 
Those numbers excluded by modulo 2 reductions are marked ``impossible''. 
Non-existence 
that results from  \cite[Theorem 6.1]{Gross--McMullen} (see Example~\ref{example-strange}) is marked ``impossible (*)''.
(In some cases both apply.)

In conclusion, {\sl there remain $39$ Salem numbers as candidates for the minimal dynamical degree of automorphisms 
of Enriques surfaces.} 

On the arXiv\footnote{\texttt{https://arxiv.org/abs/1707.02563}}
we attach two lists, in plain text format, of the coefficients of the minimal polynomials of  
(1) these $39$ candidates, and 
(2) all $133$ Salem numbers of degree $\leq 10$ up to Dolgachev's record $\lambda_D$.
\end{remark}

Finally we can give the proof of Corollary~\ref{th2}.
\begin{proof}[Proof of Corollary~\ref{th2}]
There are only finitely many Salem numbers less than $\lambda = 1.35098...$ and of degree $\leq 10$
(there are none of degree $\leq 6$, one of degree $8$, and six of degree $10$).
By a straightforward calculation 
each of those numbers 
violates the condition of Thm~\ref{mod2}.

For explicit factorization see the first seven entries of the table in $\S.$\ref{sect-appendix}~Appendix.
\end{proof}

\noindent
{\bf Acknowledgement:} This project was completed during the workshop "New Trends in Arithmetic and Geometry of Algebraic Surfaces"
in BIRS (Banff, Canada). The authors would like to thank the organizers and the staff of BIRS
for very good working conditions. We thank the referee for inspiring comments and calling our attention to the paper \cite{BF}.

\vspace{1cm}

\section{Appendix: List of low-degree Salem numbers up to $\lambda_D$} \label{sect-appendix}

Below we list all Salem numbers of degree $\leq 10$ up to the dynamical degree $\lambda_D$ of Dolgachev's example
and show the minimal polynomial $s_\lambda$ and its mod 2 factorization.
Some of them are impossible by Thm~\ref{mod2} and marked ``impossible''.
Others marked ``impossible (*)'' are those excluded by \cite[Theorem 6.1]{Gross--McMullen}.

As noted in Remark \ref{rem-possibilities}, the list of the coefficients of the minimal polynomials are available in plain text format on the arXiv. 
{\tiny 
\begin{longtable}{rrlll}
\# & deg & value & minimal polynomial $s_\lambda$ \\ &&& factorization of $s_\lambda \bmod 2$ & conclusion \\ \hline
\endhead
 & 10 & 1.17628... & $x^{10}+x^9-x^7-x^6-x^5-x^4-x^3+x+1$ & \\* &&& $(x^5 + x^3 + x^2 + x + 1)  (x^5 + x^4 + x^3 + x^2 + 1)$ & impossible \\ \hline
 & 10 & 1.21639... & $x^{10}-x^6-x^5-x^4+1$ & \\* &&& $(x^5 + x^4 + x^2 + x + 1)  (x^5 + x^4 + x^3 + x + 1)$ & impossible \\ \hline
 & 10 & 1.23039... & $x^{10}-x^7-x^5-x^3+1$ & \\* &&& $x^{10} + x^7 + x^5 + x^3 + 1$ & impossible \\ \hline
 & 10 & 1.26123... & $x^{10}-x^8-x^5-x^2+1$ & \\* &&& $(x^2 + x + 1)  (x^8 + x^7 + x^6 + x^4 + x^2 + x + 1)$ & impossible \\ \hline
 & 8 & 1.28063... & $x^8-x^5-x^4-x^3+1$ & \\* &&& $x^8 + x^5 + x^4 + x^3 + 1$ & impossible \\ \hline
 & 10 & 1.29348... & $x^{10}-x^8-x^7+x^5-x^3-x^2+1$ & \\* &&& $(x^5 + x^2 + 1)  (x^5 + x^3 + 1)$ & impossible \\ \hline
 & 10 & 1.33731... & $x^{10}-x^9-x^5-x+1$ & \\* &&& $x^{10} + x^9 + x^5 + x + 1$ & impossible \\ \hline
1 & 10 & 1.35098... & $x^{10}-x^9-x^6+x^5-x^4-x+1$ & \\* &&& $(x^2 + x + 1)^2  (x^3 + x + 1)  (x^3 + x^2 + 1)$ &  \\ \hline
 & 8 & 1.35999... & $x^8-x^7+x^6-2x^5+x^4-2x^3+x^2-x+1$ & \\* &&& $x^8 + x^7 + x^6 + x^4 + x^2 + x + 1$ & impossible \\ \hline
 & 10 & 1.38363... & $x^{10}-x^9-x^7+x^6-x^5+x^4-x^3-x+1$ & \\* &&& $(x^5 + x^3 + x^2 + x + 1)  (x^5 + x^4 + x^3 + x^2 + 1)$ & impossible \\ \hline
2 & 6 & 1.40126... & $x^6-x^4-x^3-x^2+1$ & \\* &&& $(x^2 + x + 1)  (x^4 + x^3 + x^2 + x + 1)$ &  \\ \hline
3 & 8 & 1.42500... & $x^8-x^7-x^5+x^4-x^3-x+1$ & \\* &&& $(x^4 + x + 1)  (x^4 + x^3 + 1)$ &  \\ \hline
 & 10 & 1.43100... & $x^{10}-x^9-x^8+x^7-x^5+x^3-x^2-x+1$ & \\* &&& $(x^2 + x + 1)  (x^8 + x^5 + x^4 + x^3 + 1)$ & impossible \\ \hline
 & 10 & 1.44842... & $x^{10}-2x^9+2x^8-2x^7+x^6-x^5+x^4-2x^3+2x^2-2x+1$ & \\* &&& $(x^5 + x^4 + x^2 + x + 1)  (x^5 + x^4 + x^3 + x + 1)$ & impossible \\ \hline
4 & 8 & 1.45798... & $x^8-x^6-x^5-x^3-x^2+1$ & \\* &&& $(x + 1)^2  (x^6 + x^3 + 1)$ &  \\ \hline
 & 10 & 1.47235... & $x^{10}-x^9-x^6-x^4-x+1$ & \\* &&& $(x + 1)^6  (x^4 + x^3 + x^2 + x + 1)$ & impossible (*) \\ \hline
 & 10 & 1.47960... & $x^{10}-2x^8-2x^7+x^6+3x^5+x^4-2x^3-2x^2+1$ & \\* &&& $(x^5 + x^4 + x^2 + x + 1)  (x^5 + x^4 + x^3 + x + 1)$ & impossible \\ \hline
5 & 6 & 1.50613... & $x^6-x^5-x^3-x+1$ & \\* &&& $(x^2 + x + 1)^3$ &  \\ \hline
 & 10 & 1.51386... & $x^{10}-x^7-2x^6-x^5-2x^4-x^3+1$ & \\* &&& $x^{10} + x^7 + x^5 + x^3 + 1$ & impossible \\ \hline
 & 8 & 1.52306... & $x^8-x^7-x^6+x^4-x^2-x+1$ & \\* &&& $x^8 + x^7 + x^6 + x^4 + x^2 + x + 1$ & impossible \\ \hline
6 & 10 & 1.53292... & $x^{10}-x^9-x^8+x^5-x^2-x+1$ & \\* &&& $(x^4 + x^3 + x^2 + x + 1)  (x^6 + x^3 + 1)$ &  \\ \hline
 & 8 & 1.54719... & $x^8-2x^7+2x^6-3x^5+3x^4-3x^3+2x^2-2x+1$ & \\* &&& $x^8 + x^5 + x^4 + x^3 + 1$ & impossible \\ \hline
7 & 6 & 1.55603... & $x^6-x^5-x^4+x^3-x^2-x+1$ & \\* &&& $(x^3 + x + 1)  (x^3 + x^2 + 1)$ &  \\ \hline
8 & 6 & 1.58234... & $x^6-x^4-2x^3-x^2+1$ & \\* &&& $(x + 1)^6$ &  \\ \hline
 & 10 & 1.59070... & $x^{10}-2x^9+x^8-2x^6+3x^5-2x^4+x^2-2x+1$ & \\* &&& $(x^2 + x + 1)  (x^8 + x^7 + x^6 + x^4 + x^2 + x + 1)$ & impossible \\ \hline
 & 10 & 1.59700... & $x^{10}-x^8-x^7-x^6-x^5-x^4-x^3-x^2+1$ & \\* &&& $(x^2 + x + 1)^2  (x^6 + x^3 + 1)$ &  impossible (*) \\ \hline
 & 10 & 1.59866... & $x^{10}-2x^9+x^8-x^7+2x^6-3x^5+2x^4-x^3+x^2-2x+1$ & \\* &&& $(x^5 + x^2 + 1)  (x^5 + x^3 + 1)$ & impossible \\ \hline
9 & 8 & 1.60544... & $x^8-2x^7+x^6-x^4+x^2-2x+1$ & \\* &&& $(x^4 + x^3 + x^2 + x + 1)^2$ &  \\ \hline
 & 10 & 1.62501... & $x^{10}-x^9-x^8-x^7+x^6+x^5+x^4-x^3-x^2-x+1$ & \\* &&& $x^{10} + x^9 + x^8 + x^7 + x^6 + x^5 + x^4 + x^3 + x^2 + x + 1$ &  impossible (*) \\ \hline
 & 10 & 1.62754... & $x^{10}-2x^9+2x^7-x^6-x^5-x^4+2x^3-2x+1$ & \\* &&& $(x^5 + x^4 + x^2 + x + 1)  (x^5 + x^4 + x^3 + x + 1)$ & impossible \\ \hline
10 & 6 & 1.63557... & $x^6-2x^5+2x^4-3x^3+2x^2-2x+1$ & \\* &&& $x^6 + x^3 + 1$ &  \\ \hline
 & 8 & 1.64003... & $x^8-2x^6-x^5+x^4-x^3-2x^2+1$ & \\* &&& $x^8 + x^5 + x^4 + x^3 + 1$ & impossible \\ \hline
 & 10 & 1.64558... & $x^{10}-x^9-x^8-x^2-x+1$ & \\* &&& $(x + 1)^8  (x^2 + x + 1)$ &  impossible (*) \\ \hline
 & 10 & 1.65740... & $x^{10}+x^9-2x^7-4x^6-5x^5-4x^4-2x^3+x+1$ & \\* &&& $x^{10} + x^9 + x^5 + x + 1$ & impossible \\ \hline
11 & 8 & 1.66104... & $x^8-2x^7+x^6-x^5+x^4-x^3+x^2-2x+1$ & \\* &&& $(x^2 + x + 1)  (x^3 + x + 1)  (x^3 + x^2 + 1)$ &  \\ \hline
 & 10 & 1.66929... & $x^{10}-x^9-x^7-2x^5-x^3-x+1$ & \\* &&& $(x + 1)^2  (x^8 + x^7 + x^6 + x^4 + x^2 + x + 1)$ & impossible \\ \hline
 & 10 & 1.67310... & $x^{10}-2x^9+x^8-x^7+x^5-x^3+x^2-2x+1$ & \\* &&& $(x^5 + x^2 + 1)  (x^5 + x^3 + 1)$ & impossible \\ \hline
12 & 8 & 1.68491... & $x^8-x^7-x^6-x^2-x+1$ & \\* &&& $(x + 1)^2  (x^2 + x + 1)^3$ &  \\ \hline
 & 10 & 1.69017... & $x^{10}-x^9-2x^8+x^7+x^6-x^5+x^4+x^3-2x^2-x+1$ & \\* &&& $(x^5 + x^3 + x^2 + x + 1)  (x^5 + x^4 + x^3 + x^2 + 1)$ & impossible \\ \hline
13 & 8 & 1.69350... & $x^8-x^7-x^5-x^4-x^3-x+1$ & \\* &&& $(x^4 + x + 1)  (x^4 + x^3 + 1)$ &  \\ \hline
 & 10 & 1.71336... & $x^{10}-2x^9+x^8-2x^6+2x^5-2x^4+x^2-2x+1$ & \\* &&& $(x + 1)^{10}$ &  impossible (*) \\ \hline
14 & 4 & 1.72208... & $x^4-x^3-x^2-x+1$ & \\* &&& $x^4 + x^3 + x^2 + x + 1$ &  \\ \hline
 & 10 & 1.73694... & $x^{10}-x^9-x^7-x^6-x^5-x^4-x^3-x+1$ & \\* &&& $(x^5 + x^3 + x^2 + x + 1)  (x^5 + x^4 + x^3 + x^2 + 1)$ & impossible \\ \hline
 & 10 & 1.74492... & $x^{10}-2x^9+2x^8-3x^7+2x^6-3x^5+2x^4-3x^3+2x^2-2x+1$ & \\* &&& $x^{10} + x^7 + x^5 + x^3 + 1$ & impossible \\ \hline
 & 10 & 1.74601... & $x^{10}-x^9-x^8-x^7+2x^5-x^3-x^2-x+1$ & \\* &&& $(x + 1)^4  (x^3 + x + 1)  (x^3 + x^2 + 1)$ &  impossible (*) \\ \hline
 & 10 & 1.75173... & $x^{10}-2x^9+x^8-x^7+x^6-2x^5+x^4-x^3+x^2-2x+1$ & \\* &&& $(x + 1)^2  (x^8 + x^5 + x^4 + x^3 + 1)$ & impossible \\ \hline
 & 10 & 1.75309... & $x^{10}-x^8-x^7-2x^6-3x^5-2x^4-x^3-x^2+1$ & \\* &&& $(x^5 + x^2 + 1)  (x^5 + x^3 + 1)$ & impossible \\ \hline
 & 10 & 1.76015... & $x^{10}-2x^9+x^8-2x^7+2x^6-x^5+2x^4-2x^3+x^2-2x+1$ & \\* &&& $(x^2 + x + 1)  (x^8 + x^7 + x^6 + x^4 + x^2 + x + 1)$ & impossible \\ \hline
 & 10 & 1.76400... & $x^{10}-2x^9+x^7-x^5+x^3-2x+1$ & \\* &&& $x^{10} + x^7 + x^5 + x^3 + 1$ & impossible \\ \hline
 & 10 & 1.76690... & $x^{10}-2x^8-2x^7+x^5-2x^3-2x^2+1$ & \\* &&& $(x^2 + x + 1)  (x^4 + x + 1)  (x^4 + x^3 + 1)$ &  impossible (*) \\ \hline
 & 10 & 1.77056... & $x^{10}-3x^9+4x^8-5x^7+5x^6-5x^5+5x^4-5x^3+4x^2-3x+1$ & \\* &&& $(x^5 + x^3 + x^2 + x + 1)  (x^5 + x^4 + x^3 + x^2 + 1)$ & impossible \\ \hline
15 & 6 & 1.78164... & $x^6-x^5-x^4-x^2-x+1$ & \\* &&& $(x + 1)^4  (x^2 + x + 1)$ &  \\ \hline
 & 10 & 1.78840... & $x^{10}-x^9-2x^7-x^5-2x^3-x+1$ & \\* &&& $x^{10} + x^9 + x^5 + x + 1$ & impossible \\ \hline
 & 8 & 1.79607... & $x^8-x^7-x^6-x^4-x^2-x+1$ & \\* &&& $x^8 + x^7 + x^6 + x^4 + x^2 + x + 1$ & impossible \\ \hline
 & 10 & 1.79978... & $x^{10}-3x^8-3x^7+2x^6+5x^5+2x^4-3x^3-3x^2+1$ & \\* &&& $(x^5 + x^2 + 1)  (x^5 + x^3 + 1)$ & impossible \\ \hline
16 & 8 & 1.80017... & $x^8-3x^7+4x^6-5x^5+5x^4-5x^3+4x^2-3x+1$ & \\* &&& $(x^4 + x + 1)  (x^4 + x^3 + 1)$ &  \\ \hline
17 & 10 & 1.80501... & $x^{10}-2x^9+x^7-x^6+x^5-x^4+x^3-2x+1$ & \\* &&& $(x^2 + x + 1)^3  (x^4 + x^3 + x^2 + x + 1)$ &  \\ \hline
18 & 8 & 1.80978... & $x^8-x^7-2x^5-2x^3-x+1$ & \\* &&& $(x + 1)^2  (x^3 + x + 1)  (x^3 + x^2 + 1)$ &  \\ \hline
 & 8 & 1.81161... & $x^8-2x^7+x^5-x^4+x^3-2x+1$ & \\* &&& $x^8 + x^5 + x^4 + x^3 + 1$ & impossible \\ \hline
 & 10 & 1.82383... & $x^{10}-x^8-2x^7-2x^6-2x^5-2x^4-2x^3-x^2+1$ & \\* &&& $(x + 1)^{10}$ &  impossible (*) \\ \hline
19 & 10 & 1.82514... & $x^{10}-x^9-2x^8+x^6+x^5+x^4-2x^2-x+1$ & \\* &&& $(x^2 + x + 1)^2  (x^3 + x + 1)  (x^3 + x^2 + 1)$ &  \\ \hline
20 & 6 & 1.83107... & $x^6-2x^5+x^3-2x+1$ & \\* &&& $x^6 + x^3 + 1$ &  \\ \hline
21 & 8 & 1.83488... & $x^8-x^6-2x^5-3x^4-2x^3-x^2+1$ & \\* &&& $(x^4 + x^3 + x^2 + x + 1)^2$ &  \\ \hline
 & 10 & 1.84835... & $x^{10}-x^9-x^8-x^6-x^5-x^4-x^2-x+1$ & \\* &&& $(x^2 + x + 1)^5$ &  impossible (*) \\ \hline
 & 8 & 1.84959... & $x^8+x^7-x^6-4x^5-5x^4-4x^3-x^2+x+1$ & \\* &&& $x^8 + x^7 + x^6 + x^4 + x^2 + x + 1$ & impossible \\ \hline
 & 10 & 1.85312... & $x^{10}-2x^9+x^8-2x^7+2x^6-2x^5+2x^4-2x^3+x^2-2x+1$ & \\* &&& $(x + 1)^{10}$ &  impossible (*) \\ \hline
 & 10 & 1.85712... & $x^{10}-2x^9+x^8-x^7-x^5-x^3+x^2-2x+1$ & \\* &&& $(x^5 + x^2 + 1)  (x^5 + x^3 + 1)$ & impossible \\ \hline
 & 10 & 1.86264... & $x^{10}-3x^9+3x^8-x^7-3x^6+5x^5-3x^4-x^3+3x^2-3x+1$ & \\* &&& $x^{10} + x^9 + x^8 + x^7 + x^6 + x^5 + x^4 + x^3 + x^2 + x + 1$ &  impossible (*) \\ \hline
22 & 8 & 1.86406... & $x^8-x^7-2x^6+2x^4-2x^2-x+1$ & \\* &&& $(x + 1)^2  (x^3 + x + 1)  (x^3 + x^2 + 1)$ &  \\ \hline
 & 10 & 1.86876... & $x^{10}-x^9-2x^7-3x^5-2x^3-x+1$ & \\* &&& $x^{10} + x^9 + x^5 + x + 1$ & impossible \\ \hline
 & 10 & 1.87573... & $x^{10}-2x^9-x^8+3x^7-3x^5+3x^3-x^2-2x+1$ & \\* &&& $(x^5 + x^2 + 1)  (x^5 + x^3 + 1)$ & impossible \\ \hline
23 & 4 & 1.88320... & $x^4-2x^3+x^2-2x+1$ & \\* &&& $(x^2 + x + 1)^2$ &  \\ \hline
 & 10 & 1.88996... & $x^{10}-x^9-x^8-x^6-2x^5-x^4-x^2-x+1$ & \\* &&& $(x + 1)^2  (x^4 + x + 1)  (x^4 + x^3 + 1)$ &  impossible (*) \\ \hline
 & 10 & 1.89360... & $x^{10}-x^9-2x^8+x^6+x^4-2x^2-x+1$ & \\* &&& $(x + 1)^6  (x^4 + x^3 + x^2 + x + 1)$ &  impossible (*) \\ \hline
 & 10 & 1.89663... & $x^{10}-2x^9+x^7-x^6-x^4+x^3-2x+1$ & \\* &&& $(x + 1)^4  (x^6 + x^3 + 1)$ &  impossible (*) \\ \hline
 & 10 & 1.89910... & $x^{10}-2x^9+x^6-x^5+x^4-2x+1$ & \\* &&& $(x^5 + x^4 + x^2 + x + 1)  (x^5 + x^4 + x^3 + x + 1)$ & impossible \\ \hline
 & 10 & 1.90562... & $x^{10}-x^8-2x^7-3x^6-3x^5-3x^4-2x^3-x^2+1$ & \\* &&& $(x^3 + x + 1)  (x^3 + x^2 + 1)  (x^4 + x^3 + x^2 + x + 1)$ &  impossible (*) \\ \hline
 & 10 & 1.90830... & $x^{10}-x^9-x^8-x^7-x^5-x^3-x^2-x+1$ & \\* &&& $(x^2 + x + 1)  (x^8 + x^5 + x^4 + x^3 + 1)$ & impossible \\ \hline
 & 10 & 1.91112... & $x^{10}-2x^8-2x^7-x^6-x^5-x^4-2x^3-2x^2+1$ & \\* &&& $(x^5 + x^4 + x^2 + x + 1)  (x^5 + x^4 + x^3 + x + 1)$ & impossible \\ \hline
 & 10 & 1.91445... & $x^{10}-x^9-x^7-3x^6-x^5-3x^4-x^3-x+1$ & \\* &&& $(x^5 + x^3 + x^2 + x + 1)  (x^5 + x^4 + x^3 + x^2 + 1)$ & impossible \\ \hline
24 & 8 & 1.91649... & $x^8-x^7-x^6-x^5-x^3-x^2-x+1$ & \\* &&& $(x + 1)^4  (x^4 + x^3 + x^2 + x + 1)$ &  \\ \hline
 & 8 & 1.92062... & $x^8-3x^7+3x^6-2x^5+x^4-2x^3+3x^2-3x+1$ & \\* &&& $x^8 + x^7 + x^6 + x^4 + x^2 + x + 1$ & impossible \\ \hline
 & 10 & 1.92606... & $x^{10}-x^9-x^8-x^7-x^6+x^5-x^4-x^3-x^2-x+1$ & \\* &&& $x^{10} + x^9 + x^8 + x^7 + x^6 + x^5 + x^4 + x^3 + x^2 + x + 1$ &  impossible (*) \\ \hline
25 & 8 & 1.92678... & $x^8-2x^6-2x^5-x^4-2x^3-2x^2+1$ & \\* &&& $(x^2 + x + 1)^4$ &  \\ \hline
 & 10 & 1.92990... & $x^{10}-x^9-x^8-2x^7+x^6+x^4-2x^3-x^2-x+1$ & \\* &&& $(x + 1)^2  (x^4 + x + 1)  (x^4 + x^3 + 1)$ & impossible (*)  \\ \hline
 & 10 & 1.93231... & $x^{10}-2x^9+2x^8-4x^7+3x^6-5x^5+3x^4-4x^3+2x^2-2x+1$ & \\* &&& $(x^5 + x^4 + x^2 + x + 1)  (x^5 + x^4 + x^3 + x + 1)$ & impossible \\ \hline
 & 10 & 1.93295... & $x^{10}-3x^9+3x^8-2x^7+x^5-2x^3+3x^2-3x+1$ & \\* &&& $(x^4 + x^3 + x^2 + x + 1)  (x^6 + x^3 + 1)$ &  impossible (*) \\ \hline
 & 10 & 1.93637... & $x^{10}-2x^9+x^5-2x+1$ & \\* &&& $(x^2 + x + 1)  (x^4 + x + 1)  (x^4 + x^3 + 1)$ &  impossible (*) \\ \hline
 & 10 & 1.94005... & $x^{10}-2x^9+x^8-x^7-x^6-x^4-x^3+x^2-2x+1$ & \\* &&& $(x + 1)^2  (x^8 + x^5 + x^4 + x^3 + 1)$ & impossible \\ \hline
26 & 6 & 1.94685... & $x^6-x^5-x^4-x^3-x^2-x+1$ & \\* &&& $(x^3 + x + 1)  (x^3 + x^2 + 1)$ &  \\ \hline
 & 10 & 1.94998... & $x^{10}-x^9-2x^8-x^7+x^6+3x^5+x^4-x^3-2x^2-x+1$ & \\* &&& $(x^5 + x^3 + x^2 + x + 1)  (x^5 + x^4 + x^3 + x^2 + 1)$ & impossible \\ \hline
 & 8 & 1.95530... & $x^8-2x^7-x^5+3x^4-x^3-2x+1$ & \\* &&& $x^8 + x^5 + x^4 + x^3 + 1$ & impossible \\ \hline
27 & 6 & 1.96355... & $x^6-2x^5-x^4+3x^3-x^2-2x+1$ & \\* &&& $(x^2 + x + 1)  (x^4 + x^3 + x^2 + x + 1)$ &  \\ \hline
 & 10 & 1.97209... & $x^{10}-2x^9-x^6+3x^5-x^4-2x+1$ & \\* &&& $(x^5 + x^4 + x^2 + x + 1)  (x^5 + x^4 + x^3 + x + 1)$ & impossible \\ \hline
28 & 6 & 1.97481... & $x^6-2x^5+x^4-2x^3+x^2-2x+1$ & \\* &&& $(x + 1)^6$ &  \\ \hline
29 & 6 & 1.98779... & $x^6-2x^4-3x^3-2x^2+1$ & \\* &&& $x^6 + x^3 + 1$ &  \\ \hline
30 & 8 & 1.99400... & $x^8-2x^7+x^6-2x^5+x^4-2x^3+x^2-2x+1$ & \\* &&& $(x^4 + x^3 + x^2 + x + 1)^2$ &  \\ \hline
 & 10 & 1.99703... & $x^{10}-x^9-x^8-x^7-x^6-x^5-x^4-x^3-x^2-x+1$ & \\* &&& $x^{10} + x^9 + x^8 + x^7 + x^6 + x^5 + x^4 + x^3 + x^2 + x + 1$ &  impossible (*) \\ \hline
31 & 10 & 1.99852... & $x^{10}-2x^9+x^8-2x^7+x^6-2x^5+x^4-2x^3+x^2-2x+1$ & \\* &&& $(x + 1)^2  (x^2 + x + 1)^4$ &  \\ \hline
 & 10 & 2.00145... & $x^{10}-x^9-2x^8-x^7+x^6+2x^5+x^4-x^3-2x^2-x+1$ & \\* &&& $(x + 1)^4  (x^2 + x + 1)^3$ & impossible (*)  \\ \hline
 & 10 & 2.00289... & $x^{10}-3x^9+3x^8-3x^7+3x^6-3x^5+3x^4-3x^3+3x^2-3x+1$ & \\* &&& $x^{10} + x^9 + x^8 + x^7 + x^6 + x^5 + x^4 + x^3 + x^2 + x + 1$ &  impossible (*) \\ \hline
 & 10 & 2.00573... & $x^{10}-2x^9-2x+1$ & \\* &&& $(x + 1)^2  (x^4 + x^3 + x^2 + x + 1)^2$ & impossible (*)  \\ \hline
 & 10 & 2.00624... & $x^{10}-x^8-3x^7-3x^6-4x^5-3x^4-3x^3-x^2+1$ & \\* &&& $(x + 1)^2  (x^8 + x^5 + x^4 + x^3 + 1)$ & impossible \\ \hline
 & 10 & 2.00947... & $x^{10}-2x^9+x^8-x^7-2x^6+x^5-2x^4-x^3+x^2-2x+1$ & \\* &&& $(x^5 + x^2 + 1)  (x^5 + x^3 + 1)$ & impossible \\ \hline
32 & 8 & 2.01128... & $x^8-3x^7+3x^6-3x^5+3x^4-3x^3+3x^2-3x+1$ & \\* &&& $(x^2 + x + 1)  (x^6 + x^3 + 1)$ &  \\ \hline
 & 10 & 2.01335... & $x^{10}-x^9-2x^7-2x^6-3x^5-2x^4-2x^3-x+1$ & \\* &&& $x^{10} + x^9 + x^5 + x + 1$ & impossible \\ \hline
 & 10 & 2.01488... & $x^{10}-2x^8-2x^7-2x^6-3x^5-2x^4-2x^3-2x^2+1$ & \\* &&& $(x^2 + x + 1)  (x^4 + x + 1)  (x^4 + x^3 + 1)$ &  impossible (*) \\ \hline
 & 10 & 2.01600... & $x^{10}-3x^9+2x^8+x^7-3x^6+3x^5-3x^4+x^3+2x^2-3x+1$ & \\* &&& $(x^5 + x^3 + x^2 + x + 1)  (x^5 + x^4 + x^3 + x^2 + 1)$ & impossible \\ \hline
 & 10 & 2.01671... & $x^{10}-2x^9-x^8+2x^7+x^6-3x^5+x^4+2x^3-x^2-2x+1$ & \\* &&& $(x^3 + x + 1)  (x^3 + x^2 + 1)  (x^4 + x^3 + x^2 + x + 1)$ &  impossible (*) \\ \hline
33 & 8 & 2.02202... & $x^8-2x^7-2x+1$ & \\* &&& $(x + 1)^8$ &  \\ \hline
 & 10 & 2.02344... & $x^{10}-2x^9-x^7+2x^6-x^5+2x^4-x^3-2x+1$ & \\* &&& $x^{10} + x^7 + x^5 + x^3 + 1$ & impossible \\ \hline
 & 10 & 2.02739... & $x^{10}-x^9-x^8-x^7-x^6-2x^5-x^4-x^3-x^2-x+1$ & \\* &&& $(x + 1)^2  (x^2 + x + 1)^2  (x^4 + x^3 + x^2 + x + 1)$ &  impossible (*) \\ \hline
34 & 8 & 2.03064... & $x^8-x^7-3x^5-x^4-3x^3-x+1$ & \\* &&& $(x^4 + x + 1)  (x^4 + x^3 + 1)$ &  \\ \hline
 & 10 & 2.03298... & $x^{10}-2x^9+x^6-3x^5+x^4-2x+1$ & \\* &&& $(x^5 + x^4 + x^2 + x + 1)  (x^5 + x^4 + x^3 + x + 1)$ & impossible \\ \hline
 & 10 & 2.03579... & $x^{10}-2x^9-x^6+2x^5-x^4-2x+1$ & \\* &&& $(x + 1)^6  (x^2 + x + 1)^2$ &  impossible (*) \\ \hline
 & 10 & 2.03890... & $x^{10}-x^9-2x^8-2x^7+2x^6+3x^5+2x^4-2x^3-2x^2-x+1$ & \\* &&& $x^{10} + x^9 + x^5 + x + 1$ & impossible \\ \hline
35 & 6 & 2.04249... & $x^6-3x^5+3x^4-3x^3+3x^2-3x+1$ & \\* &&& $(x^3 + x + 1)  (x^3 + x^2 + 1)$ &  \\ \hline
 & 10 & 2.04414... & $x^{10}-x^9-2x^8-x^7+x^6+x^5+x^4-x^3-2x^2-x+1$ & \\* &&& $(x^5 + x^3 + x^2 + x + 1)  (x^5 + x^4 + x^3 + x^2 + 1)$ & impossible \\ \hline
 & 10 & 2.04776... & $x^{10}-3x^9+3x^8-3x^7+2x^6-x^5+2x^4-3x^3+3x^2-3x+1$ & \\* &&& $(x^2 + x + 1)  (x^8 + x^5 + x^4 + x^3 + 1)$ & impossible \\ \hline
 & 10 & 2.04817... & $x^{10}-x^9-2x^8-x^5-2x^2-x+1$ & \\* &&& $x^{10} + x^9 + x^5 + x + 1$ & impossible \\ \hline
36 & 8 & 2.04952... & $x^8-x^7-x^6-x^5-2x^4-x^3-x^2-x+1$ & \\* &&& $(x + 1)^4  (x^4 + x^3 + x^2 + x + 1)$ &  \\ \hline
 & 10 & 2.05286... & $x^{10}-x^9-x^8-2x^7-x^5-2x^3-x^2-x+1$ & \\* &&& $(x^4 + x^3 + x^2 + x + 1)  (x^6 + x^3 + 1)$ & impossible (*)  \\ \hline
 & 10 & 2.05353... & $x^{10}-2x^9-x^8+2x^7-x^5+2x^3-x^2-2x+1$ & \\* &&& $(x^2 + x + 1)  (x^8 + x^7 + x^6 + x^4 + x^2 + x + 1)$ & impossible \\ \hline
 & 10 & 2.05523... & $x^{10}-x^9-x^8-x^7-x^6-3x^5-x^4-x^3-x^2-x+1$ & \\* &&& $x^{10} + x^9 + x^8 + x^7 + x^6 + x^5 + x^4 + x^3 + x^2 + x + 1$ &   impossible (*) \\ \hline
37 & 10 & 2.05631... & $x^{10}-2x^9-x^7+x^6+x^5+x^4-x^3-2x+1$ & \\* &&& $(x^2 + x + 1)^3  (x^4 + x^3 + x^2 + x + 1)$ &  \\ \hline
 & 10 & 2.05819... & $x^{10}-x^9-3x^7-x^6-3x^5-x^4-3x^3-x+1$ & \\* &&& $(x^5 + x^3 + x^2 + x + 1)  (x^5 + x^4 + x^3 + x^2 + 1)$ & impossible \\ \hline
38 & 8 & 2.06017... & $x^8-3x^7+2x^6+x^5-3x^4+x^3+2x^2-3x+1$ & \\* &&& $(x^4 + x + 1)  (x^4 + x^3 + 1)$ &  \\ \hline
 & 10 & 2.06226... & $x^{10}-2x^8-3x^7-2x^6-x^5-2x^4-3x^3-2x^2+1$ & \\* &&& $x^{10} + x^7 + x^5 + x^3 + 1$ & impossible \\ \hline
 & 10 & 2.06420... & $x^{10}-2x^9+x^8-2x^7-x^5-2x^3+x^2-2x+1$ & \\* &&& $(x^2 + x + 1)  (x^8 + x^7 + x^6 + x^4 + x^2 + x + 1)$ & impossible \\ \hline
 & 10 & 2.06557... & $x^{10}-x^8-3x^7-4x^6-5x^5-4x^4-3x^3-x^2+1$ & \\* &&& $(x^5 + x^2 + 1)  (x^5 + x^3 + 1)$ & impossible \\ \hline
 & 8 & 2.06972... & $x^8-2x^7+x^5-3x^4+x^3-2x+1$ & \\* &&& $x^8 + x^5 + x^4 + x^3 + 1$ & impossible \\ \hline
 & 10 & 2.07416... & $x^{10}-3x^9+4x^8-6x^7+6x^6-7x^5+6x^4-6x^3+4x^2-3x+1$ & \\* &&& $x^{10} + x^9 + x^5 + x + 1$ & impossible \\ \hline
39 & 4 & 2.08101... & $x^4-x^3-2x^2-x+1$ & \\* &&& $(x + 1)^2  (x^2 + x + 1)$ & $\exists$ example \\ \hline
\end{longtable}
}

\end{document}